\theoremstyle{definition}
\theoremstyle{definition}
\newtheorem{rmk}{Remark}
\theoremstyle{definition}
\theoremstyle{definition}\usepackage{amsmath}
\theoremstyle{plain}
\newtheorem{thm}{Theorem}
\theoremstyle{plain}
\newtheorem{prop}{Proposition}
\theoremstyle{plain}
\theoremstyle{plain}
\newtheorem{lem}{Lemma}
\newtheorem{coro}{Corollary}
\newcommand{\supp}{\operatorname{supp}}
\newcommand{\R}{\mathbb{R}}
\newcommand{\lp}{\left(}
\newcommand{\rp}{\right)}
\newcommand{\N}{\mathbb{N}}
\newcommand{\spa}{\operatorname{span}}
\newcommand{\esup}{\operatorname{ess\,sup}}
\title{A note on the Moment Problem for codimension greater than $1$}
\author{Francesco Battistoni, Enrico Miglierina}
\address{Dipartimento di Matematica per le Scienze Economiche, Finanziarie ed Attuariali\\
	Universit\`{a} Cattolica\\
	via Necchi 9\\
	20123 Milano\\
	Italy}
\email{francesco.battistoni@unicatt.it}
\email{enrico.miglierina@unicatt.it}
\date{}
\keywords{Moment problem; alternating projections; Hilbert lattices}
\subjclass[2020]{46B42; 47H09; 90C25}
\begin{document}
	
	\begin{abstract}
		We provide new conditions under which the alternating projection sequence  converges in norm for the convex feasibility problem where a linear subspace with finite codimension $N\geq 2$ and a lattice cone in a Hilbert space are considered. The first result holds for any Hilbert lattice, assuming that the orthogonal of the linear subspace 
		%orthonormal
		admits a basis made by disjoint vectors with respect to the lattice structure. The second result is specific for $\ell^2(\N)$ and is proved when only one vector of the basis is not in the cone but the sign of its components is definitively constant and its support has finite intersection with the supports of the remaining vectors.
	\end{abstract}
	\maketitle
	
	\section{Introduction}
	Given a Hilbert space $X$ and two closed convex nonempty subsets $A$ and $B$, it is a typical problem to study the intersection $A\cap B$ and, in particular, to detect which points belong to this intersection. This is known as the  {\em convex feasibility problem}, and it appears through several fields of mathematics, both in theoretical studies and in applications. In fact, many different problems can be reformulated as convex feasibility problems: as examples, we mention approximation theory \cite{Deutsch92}, solution of convex inequalities, partial differential equations, minimization of convex non-smooth functions on the theoretical side (see, e.g., the monographs \cite{BorweinZhu05book,BauschkeCombettes17book} and the references therein), whereas we recall image reconstruction (\cite{Combettes1996,CensorGibaliLenzenSchnorr16,ZhaoKobis18}, planning of medical therapies \cite{CensorAltschulerPowlis88}, signal processing \cite{CarmiCensorGurfil12} sensor network location problem \cite{GholamiTetraushviliStromCensor13} on the applications side.
	
	Researchers investigated convex feasibility by studying whether specific sequences converge in norm to points of $A\cap B$ (or points which minimize the distance between $A$ and $B$, in case the intersection is empty). Many of these methods are settled in Hilbert spaces and they are based on projections; a valuable source for an overview on projection methods is \cite{CensorCegielskiBibliography}. The most used sequence of this kind is the so called {\em alternating projection sequence}, which starting from a point $b_0\in X$ consists in the two sequences of the form $a_{n}\coloneqq P_A(b_{n-1})$ and $b_{n}\coloneqq P_B(a_n)$, where $P_A$ and $P_B$ are the projections on $A$ and $B$ respectively.  This study was started by Von Neumann \cite{VonNeumann49} assuming that $A$ and $B$ are linear subspaces of $X$: in this case the norm-convergence is assured from any starting point $b_0$, and for every point $x\in A\cap B$ a suitable alternating sequence converging to $X$ can be produced. When considering more general convex sets $A$ and $B$ with $A\cap B\neq \emptyset$, Bregman \cite{Bregman65} proved that weak convergence to a point in $A\cap B$ always holds; however, it was only some decades later that Hundal \cite{Hundal04} and Kopecka \cite{Kopecka08} provided examples where norm convergence failed. These results further justified the research, started in previous years, for conditions to be put on $A$ and $B$ in order to guarantee the norm convergence. Many results of this kind were found by Bauschke and Borwein \cite{BauschkeBorwein93}, who in particular linked the problem to the concept of regularity of the couple $(A,B)$. This was later reprised by De Bernardi, Molho and Miglierina \cite{DeBernardiMiglierinaMolho, DeBernardiMiglierina21, DeBernardiMiglierina22}  giving characterizations for generalizations of the alternating projection sequence. In fact, we also mention that several others projection sequences and methods were studied for the convex feasibility problem: some overviews may be found in \cite{BauschkeBorwein96, BorweinSimsTam15, HenrionMalick11}.\\\\
	A problem often arising in applications is to find the solutions of $T(x)=\bar{y}$ under non-negativity constraint, where $T:X\rightarrow \R^N$ is a linear operator from a Hilbert lattice $X$ with lattice cone $X^+$ to $\R^N$. This problem can be seen as a convex feasibility problem where the sets are respectively the affine subspace $T^{-1}(\bar{y})$ and the lattice cone $X^+$, ad it is usually called {\em moment problem}. 
	Bauschke and Borwein \cite[Section 5]{BauschkeBorwein93} proved that the alternating projection sequences with respect to $A=T^{-1}(\bar{y})$ and $B=X^+$ converge in norm if one of the following is satisfied:
	\begin{itemize}
		\item $Q(B)\subseteq B$, where $Q$ is the projection on $A^{\perp}$;
		\item $A \cap B^{\oplus}$, where $B^{\oplus}$ is the polar of the cone $B$;
		\item $N=1$. This case is shown to be a particular instance of the previous two conditions.
	\end{itemize}
	For higher codimension $N\geq 2$, no results concerning the norm convergence seem to be known, and surely were not known when the overview \cite{BorweinSimsTam15} was written.  Indeed, the authors of this paper pointed out that in \cite{BauschkeBorwein93} \textquotedblleft Bauscke and Borwein proved that the method of alternating projection converges in norm whenever the affine subspace has codimension one. The same was conjectured to stay true for any finite codimension, but remains a stubbornly open problem.\textquotedblright
	
	In this paper we focus on a particular instance of moment problem corresponding to finding the non-negative zeroes of the operator $T$, i.e., we assume $\bar{y}=0$. This is the same to consider that $A$ is a linear subspace of $X$ with finite codimension $N$ and $B=X^+$, so that $A\cap B$ contains $0$ at least.
	
	The goal of this paper is to present some new conditions which ensure the norm convergence for the moment problem in higher codimension: in particular, we focus on characterizing the orthornormal basis $\{v_1,v_2,\ldots,v_N\}$ of $A^{\perp}$, %by assuming that $v_1$ is in the lattice cone $B$ and the remaining vectors satisfy certain properties,
	and examples will be given in order to show that these characterizations do not fall into the already known conditions from \cite{BauschkeBorwein93} described above. A special emphasis will be given on the case $N=2$, with the results for higher values of $N$ derived by generalizations of this specific case.
	
	The paper is organized as follows.
	In Section 2 a short recall of the main concepts and properties needed for our study is given, including some properties of Hilbert lattices and a brief review of known results about the moment problem.
	
	In Section 3 we prove our main result: norm convergence of alternating projections sequences by assuming that the vectors $v_i$ are pairwise disjoint, both when $X=\ell^2(\N)$ and $X=L^2(Y,\mu)$ for some measure space $(Y,\mu)$. The last result allows us to prove the analogue claim for a generic Hilbert lattice $X$.
	
	In Section 4, we try to move a step further by considering the case where the vectors $v_i$ have pairwise finite intersection for their supports in $\ell^2(\N)$. Under this assumption, we are able to prove the norm convergence of alternating projection sequences only if we add an additional requirement. Namely, we assume that one of them is not in the cone $B$, but the sign of its components is definitively constant.% the sign of the components of at least one of the vectors $v_i$ is definitively constant.
	\\
	\\
	\textit{Acknowledgements.} The authors wish to thank Aris Daniilidis and Carlo De Bernardi for insightful discussions concerning this problem. The authors are members of the INdAM group GNAMPA. The second author is partially supported by  INdAM - GNAMPA Project” CUP E53C23001670001 and by the Ministry for Science and Innovation, Spanish State Research Agency (Spain),
	under project PID2020-112491GB-I00.

	\section{Preliminary notions}
	\subsection{Support and essential support} Let $X=\ell^2(\N)$ and let $x\in X$: we have $x=(x_1,x_2,\ldots,x_n,\ldots)$ satisfying $\sum_{i} x_i^2 < \infty$. We recall that the norm of $x$ is $\|x\|\coloneqq \lp \sum_{i} x_i^2\rp^{1/2}$.
	
	We represent a sequence of elements in $X$ with the notation $\{a^n\}$ or $\{b^n\}$: this means that for every $n$ we have $a^n = (a^n_1,a^n_2,\ldots,a^n_i,\ldots)$%, while $a^n$ is not the $n$-th power of $a$
	. 
	
	The {\em support of $x$} is the set
	$$
	\supp(x)\coloneqq \{n\in\N\colon x_n\neq 0\}.
	$$
	
	\begin{lem}\label{LemmaWeakConvFiniteSupport}
		Let $\{a^n\}$ be a sequence in $X=\ell^2(\N)$ which weakly converges to $x\in X$. Assume there exists a finite set $S\subset \N$ such that $\supp(a^n)\subseteq S$ for every $n$. Then the sequence $\{a^n\}$ converges in norm to $x$.
	\end{lem}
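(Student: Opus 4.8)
The plan is to exploit the fact that all the vectors $a^n$, together with their weak limit $x$, live in a common finite-dimensional subspace of $X$, on which weak convergence and norm convergence coincide. Concretely, I would first test the weak convergence against the standard basis vectors $e_i$ of $\ell^2(\N)$: since $\langle a^n, e_i \rangle = a^n_i$ and $\langle x, e_i \rangle = x_i$, weak convergence of $\{a^n\}$ to $x$ yields the coordinatewise convergence $a^n_i \to x_i$ for every $i \in \N$.

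Next I would record the consequence of the support hypothesis. For every $i \notin S$ we have $a^n_i = 0$ for all $n$, and letting $n \to \infty$ in the coordinatewise convergence gives $x_i = 0$; hence $\supp(x) \subseteq S$ as well, and in particular $x$ belongs to the same finite-dimensional coordinate subspace as every $a^n$.

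Finally, I would estimate the norm directly:
\[
\|a^n - x\|^2 = \sum_{i \in \N} (a^n_i - x_i)^2 = \sum_{i \in S} (a^n_i - x_i)^2,
\]
a sum of $|S|$ terms, each of which tends to $0$ by the coordinatewise convergence established in the first step. Since $S$ is finite, the finite sum tends to $0$, so $\|a^n - x\| \to 0$, which is the asserted norm convergence. I do not expect a real obstacle here; the only subtle point is that the finiteness of $S$ is essential, since for an infinite $S$ one cannot exchange the limit with the (now infinite) sum — the standard basis sequence $\{e_n\}$, which converges weakly to $0$ but not in norm, is precisely the counterexample.
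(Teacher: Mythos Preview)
Your proof is correct and follows essentially the same route as the paper: testing weak convergence against the basis vectors to obtain coordinatewise convergence, deducing $\supp(x)\subseteq S$, and then using the finiteness of $S$ to pass from the finite sum $\sum_{i\in S}(a^n_i-x_i)^2$ to zero. The additional remark about the necessity of finiteness and the counterexample $\{e_n\}$ is a nice touch not present in the paper.
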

	
	\begin{proof}
		The weak convergence implies that $a^n$ pointwise converges to $x$, i.e. for any fixed $i\in\N$ it is $a^n_i \to x_i$ for $n\to +\infty$. Now, we have $a^n_i=0$ for every $n\in\N$ and for every $i\in\N\setminus S$, so that $\supp(x)\subseteq S$. It follows that 
		$$
		\|a^n-x\|^2\leq \sum_{i\in S} (a^n_i - x_i)^2 \to 0\quad\text{ as }n\to +\infty
		$$
		and this is true since $S$ is a finite set.
	\end{proof}
	%\textcolor{red}{3/11/24}
	A similar notion of support can be given in more general function spaces. Let $(Y,\mu)$ be a measure space and let $f\in L^2(Y,\mu)$: the {\em essential support of $f$} is defined as the set
	$$
	\esup(f)\coloneqq Y\setminus\; \bigcup \,\left\{A\subseteq Y\colon A\text{ open and } f = 0 \;\,\mu\text{-almost everywhere on }A\right\}.
	$$
	It is a simple matter to check that this definition does not depend on the representative for the equivalence class of $f$. %\textcolor{red}{serve proof? Direi di no}. 
	Moreover, it reduces to the previous definition of support if we think of $\ell^2(\N)$ as $L^2(\N,\mu_c)$ with $\mu_c$ being the discrete counting measure. 
	%\textcolor{red}{stop}

	\subsection{Hilbert lattices} 
	Let $X$ be a vector space and $C\subset X$ be a {\em cone} (i.e, a set $C$ such that $C+C\subseteq C$, $\alpha C\subseteq C$ for all $\alpha \geq 0$ and, $C \cap (-C)=\{0\}$). Let us define an order structure on $X$: we say $x\leq_C y$ if $y-x \in C$. Then, $X$ is a {\em lattice} if for every $x,y\in X$ there exist $x \land y\coloneqq \inf\{x,y\}$ and $x \lor y\coloneqq \sup\{x,y\}$ where $\inf$ and $\sup$ are the greatest lower bound and the least upper bound  with respect to $\leq_C$, respectively. In this case the cone $C$ in $X$ is called {\em lattice cone} of $X$ and it is denoted by $X^+$. For every $x\in X$, we define the positive part $x^+\coloneqq x\lor 0$, the negative part $x^-\coloneqq (-x)\lor 0$ and the modulus $|x|_{X^+}\coloneqq x^+ + x^-$. %\textcolor{red}{Note that $|cx|_{X^+} = c|x|_{X^+}$ for every $c>0$.}
	
	A {\em normed vector lattice} $X$ is a vector lattice equipped with a {\em lattice norm} (i.e., a norm $\|\cdot\|$ on $X$ such that $|x|_{X^+}\leq_{X^+} |y|_{X^+}$ implies $\|x\|\leq \|y\|$). A Hilbert space $X$, endowed with the norm $\|\cdot\|$, is a {\em Hilbert lattice} if $X$ is a complete normed vector lattice with respect to the original norm $\|\cdot\|$ (see, e.g., \cite{Schaefer74book,Meyer-Nieberg91book}).
	
	Well known Hilbert lattices are the spaces $L^2(Y,\mu)$ and $\ell^2(\N)$ where the lattice cones are the cone $L^2_+(Y,\mu)$ of the functions $\mu$-almost everywhere non-negative on $Y$ and the cone $\ell^2_+(\N)$ of the elements with non-negative coordinates, respectively.
	The following result shows that a Hilbert lattice is always \textquotedblleft equal" to a space $X=L^2(Y,\mu)$ for a suitable choice of the measure $\mu$. %Moreover we point out that the lattice cone $L^2(Y,\mu)^+$ of $L^2(Y,\mu)$ is the cone of functions which are $\mu$-almost everywhere non-negative on $Y$.
	\begin{prop}\label{PropositionIsoLattice}
		Let $X$ be a Hilbert lattice. There exist a measure space $(Y,\mu)$ and a Hilbert space isomorphism $f: X \to L^2(Y,\mu)$ which is isometric, sends $X^+$ to $(L^2(Y,\mu))^+$ and preserves the lattice structure. In particular, $f(x\lor y) = f(x)\lor f(y)$ and $f(x\land y)=f(x)\land f(y)$.
	\end{prop}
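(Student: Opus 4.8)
The plan is to reduce Proposition~\ref{PropositionIsoLattice} to the classical Kakutani-type representation theorem for abstract $L^p$-spaces, specialised to $p=2$, so that the only genuinely new verification is that a Hilbert lattice satisfies the hypotheses of that theorem. The first step, then, is to connect the two structures carried by $X$ and to show that \emph{lattice-disjoint elements of $X$ are orthogonal}. Suppose $x,y\in X$ are disjoint, i.e. $|x|_{X^+}\land|y|_{X^+}=0$. In any vector lattice this forces $|x+y|_{X^+}=|x|_{X^+}+|y|_{X^+}=|x-y|_{X^+}$; since $\|\cdot\|$ is a lattice norm, this gives $\|x+y\|=\|x-y\|$, and substituting into the parallelogram identity $\|x+y\|^2+\|x-y\|^2=2\|x\|^2+2\|y\|^2$ yields $\langle x,y\rangle=0$. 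In particular, whenever $x,y\in X^+$ are disjoint we obtain $\|x+y\|^2=\|x\|^2+\|y\|^2$, which is exactly the abstract $L^2$ identity.

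With this in hand, the second step is a direct appeal to structure theory. The identity just proved says that $X$ is an abstract $L^2$-space, that is, a Banach lattice in which $\|x+y\|^2=\|x\|^2+\|y\|^2$ whenever $x,y\in X^+$ are disjoint; such spaces are automatically reflexive and have order-continuous norm. By the classical representation theorem for abstract $L^p$-spaces (Kakutani for $p=1$; Bohnenblust and Nakano for general $p$; see, e.g., \cite{Schaefer74book,Meyer-Nieberg91book}), applied with $p=2$, there exist a measure space $(Y,\mu)$ --- which may be taken to be a disjoint union of finite measure spaces --- and a surjective isometric lattice isomorphism $f:X\to L^2(Y,\mu)$. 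Being a lattice isomorphism, $f$ maps $X^+$ onto $L^2_+(Y,\mu)$ and commutes with the lattice operations, so $f(x\lor y)=f(x)\lor f(y)$ and $f(x\land y)=f(x)\land f(y)$; this is precisely the assertion of the proposition.

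I expect essentially all the difficulty to be hidden inside the representation theorem itself, not in the two steps above. Its proof is a local-to-global construction: on each principal band one shows that, after dividing by a suitable weight (a ``local density''), the norm behaves like an $L^1$-norm, so that band is isometrically a weighted $L^2$-space; one then runs a Zorn's-lemma exhaustion along a maximal disjoint family of positive unit vectors and glues the resulting local measure spaces together, order continuity of the norm being what guarantees that the pieces actually cover all of $X$. The one technical point to keep track of is arranging that $(Y,\mu)$ is decomposable (a cluster of finite measure spaces), so that $L^2(Y,\mu)$ carries its natural lattice structure without pathologies. Since all of this is classical, for a short note the sensible choice is to isolate and prove the ``disjoint $\Rightarrow$ orthogonal'' step --- which is what genuinely uses the Hilbert structure --- and then cite the representation theorem for the remainder, rather than reconstruct a lengthy argument.
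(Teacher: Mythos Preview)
Your proposal is correct and follows essentially the same route as the paper: both reduce the proposition to the representation theorem for abstract $L^p$-spaces recorded in \cite{Meyer-Nieberg91book} (the paper simply cites Corollary~2.7.5 there, which already packages the $p=2$ case). The only difference is that you explicitly verify the abstract $L^2$ identity via the ``disjoint $\Rightarrow$ orthogonal'' argument before invoking the theorem, whereas the paper absorbs that step into the cited corollary and in fact later derives disjoint~$\Rightarrow$~orthogonal \emph{from} the representation rather than as input to it; this is a harmless inversion of logical order, not a different strategy.
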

	\begin{proof}
		This is proved in \cite[Cor. 2.7.5]{Meyer-Nieberg91book} as a consequence of the more general result in \cite[Thm 2.7.1]{Meyer-Nieberg91book} which holds for Banach lattices. 
	\end{proof}
	Now, we collect some properties of Hilbert lattices that will be useful in the sequel of the paper.
	The first result we mention is a result about the existence of the limit for a monotonic sequence, where monotonicity is defined with respect to the lattice order. We point out that this result holds true in a more general framework, namely in each reflexive Banach lattice.   
	\begin{prop}\label{PropositionDecreasing}
		Let $X$ be a Hilbert lattice. Let $\{x^n\}$ be a norm-bounded sequence in $X$ such that $x^{n+1}\leq_{X^+} x^n$ for every $n$. Then the sequence converges in norm.
	\end{prop}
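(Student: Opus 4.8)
The plan is to reduce to the concrete case $X = L^2(Y,\mu)$ via Proposition \ref{PropositionIsoLattice} and then invoke the classical convergence theorems for integrals. Let $f\colon X \to L^2(Y,\mu)$ be the isometric lattice isomorphism provided by that proposition, and set $g^n \coloneqq f(x^n)$. Since $f$ is a linear isometry, $\|x^n - x\| = \|g^n - f(x)\|$ for every $x$, so norm-convergence of $\{x^n\}$ is equivalent to that of $\{g^n\}$, and $\{g^n\}$ is norm-bounded. Moreover $x^n - x^{n+1} \in X^+$ and $f(X^+) = \lp L^2(Y,\mu)\rp^+$ force $g^{n+1} \leq g^n$ $\mu$-almost everywhere, for every $n$. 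Thus it suffices to prove: a norm-bounded, $\mu$-a.e.\ decreasing sequence in $L^2(Y,\mu)$ converges in norm.

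Next I would flip the sequence to make it non-negative and increasing: put $h^n \coloneqq g^1 - g^n$, so that $0 \leq h^n \leq h^{n+1}$ $\mu$-a.e.\ and $\|h^n\| \leq \|g^1\| + \|g^n\| \leq M$ with $M$ independent of $n$. Being monotone, $\{h^n(\omega)\}$ admits a pointwise limit $h(\omega) \in [0,+\infty]$ for $\mu$-almost every $\omega$, and $h$ is measurable. The key step is to show $h \in L^2(Y,\mu)$: the functions $(h^n)^2$ form a $\mu$-a.e.\ increasing sequence of non-negative measurable functions converging pointwise to $h^2$, so by the Monotone Convergence Theorem $\int_Y h^2\, d\mu = \lim_n \int_Y (h^n)^2\, d\mu \leq M^2 < \infty$. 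In particular $h$ is finite $\mu$-a.e.\ and belongs to $L^2(Y,\mu)$.

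Finally, since $0 \leq h^n \leq h$ $\mu$-a.e.\ with $h \in L^2(Y,\mu)$, and $h^n \to h$ pointwise $\mu$-a.e., the functions $(h - h^n)^2$ are dominated by the integrable function $h^2$ and tend to $0$ pointwise; the Dominated Convergence Theorem yields $\|h^n - h\|^2 = \int_Y (h - h^n)^2\, d\mu \to 0$. Hence $g^n = g^1 - h^n \to g^1 - h$ in norm, and applying $f^{-1}$ we conclude $x^n \to f^{-1}(g^1 - h)$ in norm. (One may also observe that $f^{-1}(g^1 - h) = \inf_n x^n$ with respect to $\leq_{X^+}$, although this is not needed.) I do not expect a genuine obstacle: the only delicate point is that the pointwise limit of $\{h^n\}$ could a priori be $+\infty$ on a set of positive measure, which is precisely what the Monotone Convergence step rules out using the norm bound.
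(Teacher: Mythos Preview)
Your argument is correct. The reduction to $L^2(Y,\mu)$ via Proposition~\ref{PropositionIsoLattice} is legitimate (it precedes Proposition~\ref{PropositionDecreasing} in the paper and does not depend on it), and the Monotone/Dominated Convergence steps are sound for an arbitrary measure space; the only mild care needed is in choosing representatives so that the inequalities $h^n \leq h^{n+1}$ hold simultaneously off a single null set, which is immediate since countably many null sets are involved.

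The paper takes a different, more abstract route: it cites Lemma~2.39 and Theorem~2.45 of Aliprantis--Tourky to conclude directly, in any Hilbert lattice, that a norm-bounded increasing sequence in $X^+$ converges in norm, and then applies this to $y^n \coloneqq x^1 - x^n$. So you and the paper share the same ``flip to a non-negative increasing sequence'' reduction, but diverge on how to finish: the paper invokes general order-continuity results for Hilbert (indeed reflexive Banach) lattices, whereas you unwind everything through the $L^2$ representation and classical integration theory. Your version is more self-contained relative to the paper's own toolkit (it uses only Proposition~\ref{PropositionIsoLattice} and standard measure theory), at the cost of being somewhat longer; the paper's version is shorter but outsources the work to an external reference and, as the paper notes, actually holds in the broader setting of reflexive Banach lattices, which your $L^2$-specific argument does not immediately give.
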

	For the sake of convenience of the reader, we provide a short proof of this proposition.
	\begin{proof}
		Since $X$ is a Hilbert lattice, by combining Lemma 2.39 and Theorem 2.45 in \cite{AliprantisTourkyCones}, we have  that a sequence $\{y^n\}$ satisfying $0\leq_{X^+} y^n$,\; $y^n\leq_{X^+} y^{n+1}$ for every $n$ and $\sup \|y^n\|<\infty$ converges in norm.  Now, we consider the sequence $\{y^n\}$ where $y^n\coloneqq x^1-x^n$. This sequence satisfies the conditions above, and the norm convergence of $\{y^n\}$ implies the one of $\{x^n\}$.
	\end{proof}
	%In the following remark we state a useful property of the projection of a given element on the lattice cone of a Hilbert lattice.
	\begin{rmk}
		Let $X$ be a Hilbert lattice, then $P_{X^+}(x)=x^+$ for every $x \in X$, where $P_A(x)$ denotes the standard projection of an element $x$ on the convex set $A$ (see \cite{BauschkeBorwein93}).\\
		\noindent Moreover, we are interested in two particular cases:
		\begin{itemize}
			\item If $X=\ell^2(\N)$ and $X^+$ is the cone formed by elements with non-negative coordinates, then $x^+ = P_{X^+}(x)$ has coordinates equal to the positive part (as real numbers) of the coordinates of $x$;
			\item If $X=L^2(Y,\mu)$ and $X^+$ is the cone whose elements are functions $\mu$-almost everywhere non-negative on $Y$, then $x^+$ is the equivalence class of the function obtained by taking the positive part (as functions with values in $\R$) of $x$. 
		\end{itemize}
	\end{rmk}

	We conclude this subsection by considering disjoint elements in a Hilbert lattice.
	We recall that two elements $x,y$ in a vector lattice $X$ are {\em disjoint} if $|x|_{X^+}\land |y|_{X^+} =0$.
	
	\begin{prop}\label{PropositionDisjointEssSup}
		Let $f_1,f_2 \in L^2(Y,\mu)$. Then $f_1$ and $f_2$ are disjoint with respect to the structure induced by $(L^2(Y,\mu))^+$ if and only if 
		$$\esup(f_1)\cap \esup(f_2)=\emptyset.$$
	\end{prop}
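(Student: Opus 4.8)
The plan is to translate the lattice condition into a pointwise, measure-theoretic one and then to compare it with the topological definition of $\esup$. Working with concrete representatives in $L^2(Y,\mu)$, the modulus $|f|_{X^+}$ is $\mu$-a.e. equal to the ordinary absolute value $|f|$, and for $0\leq_{X^+}g_1$, $0\leq_{X^+}g_2$ the infimum $g_1\land g_2$ is $\mu$-a.e. equal to the pointwise minimum $\min(g_1,g_2)$ (the standard description of the lattice operations on a function lattice, consistent with the Remark that $P_{X^+}(x)=x^+$). Hence $f_1$ and $f_2$ are disjoint, i.e. $|f_1|_{X^+}\land|f_2|_{X^+}=0$, precisely when $\min(|f_1(y)|,|f_2(y)|)=0$ for $\mu$-a.e. $y$, that is, precisely when the measurable set $E:=\{f_1\neq 0\}\cap\{f_2\neq 0\}$ is $\mu$-null. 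So the Proposition amounts to the equivalence: $E$ is $\mu$-null $\iff$ $\esup(f_1)\cap\esup(f_2)=\emptyset$.

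To prove this equivalence, I would first record two facts about the essential support. First, $f$ vanishes $\mu$-a.e. on $Y\setminus\esup(f)$: this is immediate for $\ell^2(\N)$, where in fact $\esup(x)=\{n:x_n\neq 0\}$, and in the general case it follows by reducing the defining union of open $\mu$-null sets to a countable subfamily; consequently $\{f\neq 0\}\subseteq\esup(f)$ modulo a $\mu$-null set. Second, a point $y$ lies in $\esup(f)$ if and only if $\mu(U\cap\{f\neq 0\})>0$ for every open neighbourhood $U$ of $y$, since otherwise $f=0$ $\mu$-a.e. on some such $U$ and then $U\subseteq Y\setminus\esup(f)$. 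Granting these, the implication \textquotedblleft$\esup(f_1)\cap\esup(f_2)=\emptyset\Rightarrow E$ is $\mu$-null\textquotedblright\ is immediate: by the first fact $E$ is contained, up to a $\mu$-null set, in $\esup(f_1)\cap\esup(f_2)=\emptyset$.

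The converse is the delicate direction, and I expect it to be the main obstacle. Assume $E$ is $\mu$-null; if there were $y_0\in\esup(f_1)\cap\esup(f_2)$, then by the second fact every open neighbourhood $U$ of $y_0$ would meet both $\{f_1\neq 0\}$ and $\{f_2\neq 0\}$ in positive measure, and one must upgrade this to a single neighbourhood with $\mu(U\cap\{f_1\neq 0\}\cap\{f_2\neq 0\})>0$, contradicting the nullity of $E$. For $\ell^2(\N)$ this upgrade is trivial: take $U=\{y_0\}$, which is open of positive mass, so $f_1(y_0)\neq 0$ and $f_2(y_0)\neq 0$, i.e. $y_0\in E$. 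In general it requires a compatibility between the topology and the measure on the space $Y$ occurring here (the one provided by Proposition \ref{PropositionIsoLattice}) — namely that nonempty open sets have positive measure and that measurable sets coincide with open sets modulo $\mu$-null sets; then $\{f_i\neq 0\}$ equals, up to a null set, an open set $V_i$, whence $\esup(f_i)=V_i$ and $\esup(f_1)\cap\esup(f_2)=V_1\cap V_2$ is an open $\mu$-null set, hence empty. Apart from securing this compatibility, the argument is just bookkeeping with representatives.
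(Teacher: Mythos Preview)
Your reduction of disjointness to the nullity of $E=\{f_1\neq 0\}\cap\{f_2\neq 0\}$ is correct, and the implication ``$\esup(f_1)\cap\esup(f_2)=\emptyset\Rightarrow$ disjoint'' is fine once one knows that $f=0$ $\mu$-a.e.\ off $\esup(f)$. The paper argues a bit differently, working with the complements $A_i=Y\setminus\esup(f_i)$ and the closed set $S=\esup(f_1)\cap\esup(f_2)$ directly rather than with your $E$, but both routes face the same obstacle in the forward direction---and you are right to flag it, because the topology--measure compatibility you propose cannot be secured for an arbitrary topological measure space.

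In fact the forward implication is false as stated. Take $Y=\R$ with Lebesgue measure, $f_1=\chi_{[0,1]}$ and $f_2=\chi_{[1,2]}$: then $|f_1|\land|f_2|=\chi_{\{1\}}=0$ in $L^2$, so $f_1$ and $f_2$ are disjoint, yet $\esup(f_1)\cap\esup(f_2)=\{1\}\neq\emptyset$. The paper's own argument breaks precisely at the claim ``$S$ cannot have measure zero, otherwise $Y=A_1\cup A_2$ by definition of the sets $A_i$'': here $S=\{1\}$ is $\mu$-null but $A_1\cup A_2=\R\setminus\{1\}\neq\R$, since $S$ is closed and cannot be absorbed into either open $A_i$. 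What the paper actually needs downstream (in Theorem~\ref{theoremDisjointL2Y}) is only that the measurable sets $\{f_i\neq 0\}$ are pairwise disjoint up to a null set---exactly your $E$ being null---and the decomposition $g=g_{E_1}+g_{E_2}+g_Z$ there works verbatim with $E_i=\{f_i\neq 0\}$ in place of $\esup(f_i)$. So your diagnosis is sound: the honest statement should use measurable supports, and your first paragraph already contains the proof of the equivalence that is really required.
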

	
	\begin{proof}
		In this lattice structure, the element $f\land g$ is defined as the equivalence class of the function $\inf_{y\in Y}\{f(y),g(y)\}$: this definition does not depend on the representatives for $f$ and $g$.
		
		We can write $\esup(f_i)$ as $Y\setminus A_i$, where $A_i$ is the largest open set of $Y$ such that $f_i$ is $\mu$-almost everywhere null on $A_i$. Therefore the condition $\esup(f_1)\cap \esup(f_2)=\emptyset$ is equivalent to $A_1\cup A_2=Y$.
		
		Assume $f_1$ and $f_2$ are disjoint: then $|f_1|\land |f_2|$ is the class of the null function. If $Y \supsetneq A_1\cup A_2 $, the set $S\coloneqq Y\setminus (A_1\cup A_2)$ cannot have measure zero, otherwise $Y = A_1\cup A_2$ by definition of the sets $A_i$, and so $S$ has positive (possibly infinite) measure. But $S$ is such that neither $f_1$ and $f_2$ are $\mu$-almost everywhere 0 on it: therefore $|f_1|\land |f_2|$ is not the class of the null function when restricted to $S$, which contradicts the hypothesis.
		
		If instead we assume $Y=A_1\cup A_2$, the fact that $f_i = 0$ almost everywhere when restricted to $A_i$ immediately gives $|f_1|\land |f_2|=0$ almost everywhere, i.e. the two functions are disjoint.
	\end{proof}

	Finally, it is worth noting that, by Propositions \ref{PropositionIsoLattice} and \ref{PropositionDisjointEssSup}, disjoint elements in a Hilbert lattice $X$ are orthogonal with respect to the inner product of $X$.

	\subsection{Results on alternating projections} Let $X$ be a Hilbert space and $A,B\subseteq X$ closed convex and non-empty. From now on, we assume $A\cap B\neq \emptyset$.
	
	Given $b^0\in X$, define the {\em alternating projection sequences} as
	\begin{equation}\label{alternatingProjectionDefinition}
		a^{n+1}\coloneqq P_A(b^n),\quad b^{n+1}\coloneqq P_B(a^{n+1})\quad\forall n\geq 0.    
	\end{equation}
	
	We recall some properties of these sequences. The first is Bregman's result \cite{Bregman65} on weak convergence.
	
	\begin{prop}\label{PropositionWeakConvergence}
		There exists $x\in A\cap B$ such that both $\{a^n\}$ and $\{b^n\}$ weakly converge to $x$.
	\end{prop}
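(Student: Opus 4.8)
The plan is to exploit the \emph{Fej\'er monotonicity} of the alternating projection sequences with respect to any point of $A\cap B$. Fix once and for all $x^*\in A\cap B$. The only tool needed is the variational characterization of the metric projection onto a closed convex set $C$: for every $y\in X$ and every $c\in C$ one has $\langle y-P_C(y),\,c-P_C(y)\rangle\leq 0$, whence, expanding $\|y-c\|^2=\|y-P_C(y)\|^2+2\langle y-P_C(y),\,P_C(y)-c\rangle+\|P_C(y)-c\|^2$,
\[
\|P_C(y)-c\|^2 \;\leq\; \|y-c\|^2-\|y-P_C(y)\|^2 .
\]
Applying this with $(C,y,c)=(A,b^n,x^*)$ and then with $(C,y,c)=(B,a^{n+1},x^*)$ gives
\[
\|a^{n+1}-x^*\|^2\leq \|b^n-x^*\|^2-\|b^n-a^{n+1}\|^2,\qquad
\|b^{n+1}-x^*\|^2\leq \|a^{n+1}-x^*\|^2-\|a^{n+1}-b^{n+1}\|^2 .
\]
Hence $\|b^0-x^*\|\geq\|a^1-x^*\|\geq\|b^1-x^*\|\geq\|a^2-x^*\|\geq\cdots$ is nonincreasing, so the sequences $\{a^n\}$ and $\{b^n\}$ are bounded and $\|b^n-x^*\|$, $\|a^n-x^*\|$ converge. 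Summing all consecutive differences of squares telescopes to at most $\|b^0-x^*\|^2$, so $\sum_n\|b^n-a^{n+1}\|^2<\infty$ and $\sum_n\|a^{n+1}-b^{n+1}\|^2<\infty$; in particular $\|a^n-b^n\|\to 0$, and therefore $\{a^n\}$ and $\{b^n\}$ have exactly the same weak cluster points.

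Next I would identify those cluster points. Since $X$ is a Hilbert space, the bounded sequence $\{b^n\}$ admits a subsequence $b^{n_k}\rightharpoonup x$. Because $\|a^{n_k}-b^{n_k}\|\to 0$ we also have $a^{n_k}\rightharpoonup x$. The sets $A$ and $B$ are convex and norm-closed, hence weakly closed; from $a^{n_k}\in A$ we get $x\in A$ and from $b^{n_k}\in B$ we get $x\in B$, so every weak cluster point of $\{b^n\}$ belongs to $A\cap B$.

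It remains to prove that this cluster point is unique, after which weak convergence of the (bounded) sequence $\{b^n\}$ follows, and then $a^n\rightharpoonup x$ as well by $\|a^n-b^n\|\to 0$. Suppose $x,y\in A\cap B$ are both weak cluster points of $\{b^n\}$. By the first paragraph both $\|b^n-x\|$ and $\|b^n-y\|$ converge, hence so does
\[
\|b^n-x\|^2-\|b^n-y\|^2 \;=\; 2\langle b^n,\,y-x\rangle+\|x\|^2-\|y\|^2,
\]
and therefore $\langle b^n,\,y-x\rangle$ converges. Evaluating its limit along a subsequence with $b^n\rightharpoonup x$ gives $\langle x,y-x\rangle$, and along a subsequence with $b^n\rightharpoonup y$ gives $\langle y,y-x\rangle$; equating the two yields $\|x-y\|^2=0$, so $x=y$.

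The main obstacle is essentially the last step: boundedness, asymptotic regularity $\|a^n-b^n\|\to0$, and membership of the cluster points in $A\cap B$ are immediate consequences of firm nonexpansiveness of projections and of weak closedness of closed convex sets, whereas upgrading ``every weak cluster point lies in $A\cap B$'' to a genuine weak limit requires one more use of the Fej\'er inequality to pin down a single point.
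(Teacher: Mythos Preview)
Your proof is correct and is the standard route to Bregman's weak convergence theorem: firm nonexpansiveness of metric projections gives Fej\'er monotonicity and asymptotic regularity $\|a^n-b^n\|\to 0$, weak closedness of closed convex sets forces every weak cluster point into $A\cap B$, and the polarization identity combined with Fej\'er monotonicity yields uniqueness of that cluster point (this last step is often packaged as Opial's lemma).

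There is nothing to compare against in the paper itself: the proposition is stated without proof and simply attributed to Bregman~\cite{Bregman65}. Your argument supplies exactly the classical proof one would find behind that citation.
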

	
	The second is a direct consequence of the fact that projection operators are non-expansive.
	
	\begin{prop}\label{PropositionFejer}
		The alternating projection sequences $\{a^n\}$ and $\{b^n\}$ are Fejér-monotone with respect to $A\cap B$, i.e. for every $x\in A\cap B$ the inequalities
		$$
		\|a^{n+1}-x\|\leq \|a^n-x\|\quad\text{ and }\quad\|b^{n+1}-x\|\leq \|b^n-x\|
		$$
		hold.
	\end{prop}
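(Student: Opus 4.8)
The plan is to reduce everything to the single elementary fact that the metric projection onto a closed convex set $C$ in a Hilbert space is nonexpansive \emph{relative to the points of $C$}: for every $z\in X$ and every $c\in C$ one has $\|P_C(z)-c\|\le\|z-c\|$. First I would recall the variational characterization of $P_C$, namely that $p=P_C(z)$ is the unique element of $C$ satisfying the obtuse-angle criterion $\langle z-p,\,c-p\rangle\le 0$ for all $c\in C$. Then, for an arbitrary $c\in C$, expanding
$$\|z-c\|^2=\|z-p\|^2+2\langle z-p,\,p-c\rangle+\|p-c\|^2$$
and noting that the middle term equals $-2\langle z-p,\,c-p\rangle\ge 0$ by the criterion, one gets $\|z-c\|^2\ge\|p-c\|^2$, which is exactly the claimed inequality.

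Next I would apply this twice. Fix $x\in A\cap B$, so that $x\in A$ and $x\in B$ at the same time. Applying the inequality with $C=A$, $z=b^{n}$, $c=x$ gives $\|a^{n+1}-x\|=\|P_A(b^{n})-x\|\le\|b^{n}-x\|$, and applying it with $C=B$, $z=a^{n+1}$, $c=x$ gives $\|b^{n+1}-x\|=\|P_B(a^{n+1})-x\|\le\|a^{n+1}-x\|$. Combining these with the corresponding estimates one step earlier (and, for $n=0$, using $a^1=P_A(b^0)$ together with $x\in A$) yields
$$\|a^{n+1}-x\|\le\|b^{n}-x\|\le\|a^{n}-x\|\qquad\text{and}\qquad\|b^{n+1}-x\|\le\|a^{n+1}-x\|\le\|b^{n}-x\|,$$
which are precisely the two asserted monotonicity inequalities.

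There is no genuine obstacle here: the proposition is a standard consequence of the firm nonexpansiveness of projections onto convex sets. The only point requiring a little care is to invoke the variational inequality characterizing $P_C$ rather than the mere nonexpansiveness of $P_C$ as a self-map of $X$ (which would only give $\|P_C(z)-P_C(z')\|\le\|z-z'\|$, not quite what is needed, since $x$ is a fixed point of the projection but is not obtained as an image under a single projection in the recursion). One could alternatively quote firm nonexpansiveness from \cite{BauschkeBorwein93} directly, but the two-line computation above is self-contained and uses nothing beyond the definition of the metric projection.
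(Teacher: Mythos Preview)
Your proof is correct and follows exactly the approach the paper indicates: the paper does not spell out a proof but merely states that the proposition ``is a direct consequence of the fact that projection operators are non-expansive,'' which is precisely what you unfold via the obtuse-angle criterion.

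One small quibble with your closing remark: the ordinary nonexpansiveness $\|P_C(z)-P_C(z')\|\le\|z-z'\|$ \emph{does} already suffice here, because $x\in A\cap B$ means $P_A(x)=x$ and $P_B(x)=x$, so taking $z'=x$ gives $\|P_C(z)-x\|\le\|z-x\|$ directly. Your variational-inequality derivation is of course equally valid (and in fact establishes the nonexpansiveness you then use), but the distinction you draw between ``mere nonexpansiveness'' and what is needed is not quite accurate.
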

	\noindent
	Note that a Fejér-monotone sequence is always bounded.
	
	Now, let $X$ be a Hilbert lattice, $A\coloneqq (\spa\{v_1,\ldots,v_N\})^{\perp}$, where the vectors $v_i$ form an orthonormal system in $X$, and $B\coloneqq X^+$. Define $Q(x)\coloneqq \sum_{i=1}^N \langle x,v_i\rangle v_i$. We have $A\cap B\neq \emptyset$ since $0\in A\cap B$, and
	$$
	P_A(x) = x -Q(x),\quad P_B(y) = y^+\quad\text{ for }x,y\in X.
	$$
	We recall a result by Bausche and Borwein \cite{BauschkeBorwein93} which ensures the convergence of the alternating projection sequences when $A$ and $B$ as above satisfy some further conditions.
	
	\begin{prop}\label{PropositionConditionsBB}
		Let $X$ be a Hilbert lattice, $A\coloneqq (\spa\{v_1,\ldots,v_N\})^{\perp}$ as above and $B\coloneqq X^+$. The alternating projection sequences with respect to $A$ and $B$ converge in norm to a point in $A\cap B$ if one of the following is satisfied.
		\begin{itemize}
			\item[1)] $Q(X^+)\subseteq X^+$.
			\item[2)] $X^+\cap \spa\{v_1,\ldots,v_N\} = \{0\}$. 
		\end{itemize}
	\end{prop}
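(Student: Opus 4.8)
The statement is essentially \cite[Section 5]{BauschkeBorwein93}, and the plan is to give a self-contained argument for item~1) using the lattice tools recalled above, and to reduce item~2) to a regularity property of the pair $(A,B)$ established in \cite{BauschkeBorwein93}. The common backbone in both cases is: by Proposition~\ref{PropositionWeakConvergence} there is $x\in A\cap B$ with $a^n\rightharpoonup x$ and $b^n\rightharpoonup x$; by Proposition~\ref{PropositionFejer} both sequences are norm-bounded; and, by the formulas recalled before the statement, $a^{n+1}=b^n-Q(b^n)$ and $b^{n+1}=(a^{n+1})^+$. Since $Q$ is a bounded finite-rank operator and $Q(x)=0$ (because $x\in A$), the weak convergence $b^n\rightharpoonup x$ gives $Q(b^n)\rightharpoonup Q(x)=0$, and as the $Q(b^n)$ lie in the finite-dimensional space $\spa\{v_1,\dots,v_N\}$ this is norm convergence: $Q(b^n)\to 0$, hence $\|a^{n+1}-b^n\|\to 0$.

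For item~1) I would work from index $n\ge 1$, so that $b^n=(a^n)^+\in X^+$ and therefore $Q(b^n)\in X^+$ by hypothesis. Then $b^n\geq_{X^+}0$ and $b^n\geq_{X^+}b^n-Q(b^n)=a^{n+1}$, and since $b^{n+1}=a^{n+1}\lor 0$ is the least upper bound of $a^{n+1}$ and $0$, we obtain $b^{n+1}\leq_{X^+}b^n$. Thus $\{b^n\}_{n\ge 1}$ is $\leq_{X^+}$-decreasing and norm-bounded, so Proposition~\ref{PropositionDecreasing} yields that it converges in norm; the limit must coincide with its weak limit $x\in A\cap B$. Finally $a^{n+1}=P_A(b^n)\to P_A(x)=x$ by continuity of $P_A$ and $x\in A$, so both sequences converge in norm to $x$.

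For item~2) the first step is to observe that the hypothesis $X^+\cap\spa\{v_1,\dots,v_N\}=\{0\}$ is equivalent to $A+X^+=X$. Indeed, $A+X^+$ is a convex cone and, by the bipolar theorem, $\overline{A+X^+}=(A^{\perp}\cap(X^+)^{\oplus})^{\oplus}$; here $(X^+)^{\oplus}=-X^+$ since the cone of a Hilbert lattice is self-dual (use Proposition~\ref{PropositionIsoLattice} to pass to $L^2(Y,\mu)$), and as $A^{\perp}$ is a subspace the intersection $A^{\perp}\cap(-X^+)$ reduces to $\{0\}$ precisely when $A^{\perp}\cap X^+=\{0\}$; so the hypothesis is equivalent to $\overline{A+X^+}=X$. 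Writing $X=A\oplus A^{\perp}$ one checks $A+X^+=A+Q(X^+)$ with $Q(X^+)\subseteq A^{\perp}$, and a convex cone dense in the finite-dimensional space $A^{\perp}$ must equal $A^{\perp}$ (a dense convex subset of a Euclidean space has interior equal to the whole space); hence in fact $A+X^+=X$, i.e.\ $0\in\operatorname{int}(A-X^+)$. Consequently the pair $(A,X^+)$ is boundedly linearly regular in the sense of \cite{BauschkeBorwein93}: there is $\kappa>0$ with $d(z,A\cap X^+)\le\kappa\max\{d(z,A),d(z,X^+)\}$ for $z$ in bounded sets. Since $d(b^n,A)=\|Q(b^n)\|\to 0$ and $d(b^n,X^+)=0$, this gives $d(b^n,A\cap B)\to 0$, which together with the Fej\'er-monotonicity of $\{b^n\}$ (Proposition~\ref{PropositionFejer}) forces norm convergence of $\{b^n\}$, and then of $\{a^n\}$, to a point of $A\cap B$.

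The genuine obstacle is item~2): the implications ``$0\in\operatorname{int}(A-B)\Rightarrow$ bounded linear regularity $\Rightarrow$ norm convergence of alternating projections'' are exactly the part of the Bauschke--Borwein regularity theory that cannot be recovered from the preliminaries collected in this excerpt, so I would invoke \cite{BauschkeBorwein93} for that step rather than reprove it. By contrast, the elementary equivalence of the hypothesis of item~2) with $A+X^+=X$, and the whole of item~1) via Proposition~\ref{PropositionDecreasing}, are short.
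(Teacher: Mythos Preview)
Your proposal is correct and follows essentially the same route as the paper. The paper itself does not give a proof of this proposition; it only records in the Remark immediately following it that condition~1) is handled in \cite[Theorem~5.1]{BauschkeBorwein93} by showing that $\{b^n\}$ is $\leq_{X^+}$-decreasing and then invoking Proposition~\ref{PropositionDecreasing}, and that condition~2) is handled in \cite[Theorem~5.3]{BauschkeBorwein93} by establishing regularity of the pair $(A,B)$. Your argument for item~1) is exactly this decreasing-sequence argument, and for item~2) you likewise reduce to regularity and defer the ``regularity $\Rightarrow$ norm convergence'' step to \cite{BauschkeBorwein93}; the additional work you supply---the equivalence of $X^+\cap A^\perp=\{0\}$ with $A+X^+=X$ via the bipolar theorem, self-duality of $X^+$, and the finite-dimensional density argument for $Q(X^+)$---is more than the paper records but is correct and consistent with the cited source.
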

	
	\begin{rmk}
		The condition 1) is proved in \cite[Theorem 5.1]{BauschkeBorwein93} by showing that the sequence $\{b^n\}$ on the cone is decreasing with respect to the lattice structure, so that it converges thanks to Proposition \ref{PropositionDecreasing}. In particular, 1) holds if $\{v_1,\ldots,v_N\}$ is an orthogonal subset of $X^+\cup (-X^+)$.
		
		The condition 2) is instead proved by showing that it implies the regularity of the couple $(A,B)$ (see \cite[Theorem 5.3]{BauschkeBorwein93}). 
	\end{rmk}
	
	\begin{rmk}\label{RemarkCodimension1}
		When $N=1$, the norm convergence of the sequences is guaranteed because either $v_1\in X^+ \cap (-X^+)$ or $X^+\cap\; \spa\{v_1\}=\{0\}$. An analogue dichotomy for $\{v_1,\ldots,v_N\}$ fails if $N\geq 2$.
	\end{rmk}
	In the following sections, we will show that every new result applies to cases which cannot be solved by Proposition \ref{PropositionConditionsBB}. 
	
	\section{Main result: subspace generated by disjoint vectors}
	In this section we  consider our problem under the assumption that the orthonormal vectors $v_1,\ldots,v_N$ are disjoint with respect to the lattice structure induced by the cone $X^+$ in the Hilbert lattice $X$.
	
	We begin by assuming that $N=2$ and $X=\ell^2(\N)$: by Proposition \ref{PropositionDisjointEssSup}, two vectors $v_1$ and $v_2$ are disjoint if and only if $\supp(v_1)\cap\supp(v_2)=\emptyset$.
	
	\begin{thm}\label{TheoremDisjointEll2N}
		Let $X\coloneqq \ell^2(\N)$ and let $\{v_1,v_2\}$ be two disjoint vectors %of norm 1
		in $X$. Let $A\coloneqq (\spa\{v_1,v_2\})^{\perp}$  and $B\coloneqq (\ell^2(\N))^+$. Then the alternating projection sequences with respect to $A$ and $B$ starting from a point $x\in X$ converge in norm to a point $y\in A\cap B$.
	\end{thm}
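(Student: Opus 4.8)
The plan is to use the disjointness of $v_1$ and $v_2$ to decouple the alternating projection dynamics into three mutually independent sub-problems, two of which are codimension-one moment problems already settled by Proposition \ref{PropositionConditionsBB}. Since replacing $v_1,v_2$ by $v_1/\|v_1\|,v_2/\|v_2\|$ does not change $A$, I may assume $\|v_1\|=\|v_2\|=1$ (and discard any $v_i$ that is $0$, which only makes things easier). I would then set $S_1:=\supp(v_1)$, $S_2:=\supp(v_2)$ and $S_0:=\N\setminus(S_1\cup S_2)$; by Proposition \ref{PropositionDisjointEssSup} disjointness of $v_1,v_2$ is exactly $S_1\cap S_2=\emptyset$, so $\N$ is partitioned by $S_0,S_1,S_2$ and $X=H_0\oplus H_1\oplus H_2$ is a finite orthogonal direct sum, where $H_i:=\{x\in X:\supp(x)\subseteq S_i\}$ --- each $H_i$ being itself a Hilbert lattice, a copy of $\ell^2(S_i)$, with cone $H_i^+:=H_i\cap X^+$.

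The key step --- and the one I expect to require the most care --- is to check that both $A$ and $B$ are compatible with this decomposition, so that $P_A$ and $P_B$ act block-diagonally. For $B=X^+$ this is immediate, since the positive part is taken coordinatewise, giving $P_B(x)=\sum_i P_{H_i^+}(x_i)$ for $x=x_0+x_1+x_2$ with $x_i\in H_i$. For $A$ the crucial observation is that $\langle x,v_j\rangle$ depends only on $x_j$ (as $v_j$ is supported in $S_j$), so that $x\in A$ iff $x_1\perp v_1$ and $x_2\perp v_2$; hence $A=H_0\oplus V_1^{\perp}\oplus V_2^{\perp}$, where $V_i:=\spa\{v_i\}$ and $V_i^{\perp}$ is its orthogonal complement \emph{inside} $H_i$, and therefore $P_A(x)=x_0+P_{V_1^{\perp}}(x_1)+P_{V_2^{\perp}}(x_2)$. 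With this in hand, writing $b^0=b^0_0+b^0_1+b^0_2$, an easy induction shows that the alternating projection sequences \eqref{alternatingProjectionDefinition} split as $a^n=\sum_i a^n_i$, $b^n=\sum_i b^n_i$, where for each $i$ the pair $(a^n_i,b^n_i)$ is precisely the alternating projection sequence in $H_i$ for the sets $A\cap H_i$ and $B\cap H_i$ started at $b^0_i$.

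It then remains to treat the three pieces and reassemble. For $i=0$, $A\cap H_0=H_0$, so $P_{A\cap H_0}$ is the identity and a one-line computation gives $a^n_0=b^n_0=(b^0_0)^+$ for all $n\geq 2$; this component is eventually constant. For $i\in\{1,2\}$, $(A\cap H_i,B\cap H_i)=(V_i^{\perp},H_i^+)$ is a codimension-one instance of the moment problem in the Hilbert lattice $H_i$, and Remark \ref{RemarkCodimension1} applies: either all nonzero coordinates of $v_i$ have the same sign, so that $v_i\in H_i^+\cup(-H_i^+)$ and condition 1) of Proposition \ref{PropositionConditionsBB} holds in $H_i$, or $v_i$ has coordinates of both signs, so that $H_i^+\cap V_i=\{0\}$ and condition 2) holds; in either case $\{a^n_i\}$ and $\{b^n_i\}$ converge in norm to a common point $y_i\in A\cap B\cap H_i$. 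Finally, setting $y:=(b^0_0)^++y_1+y_2$, one gets $\|a^n-y\|^2=\sum_{i=0}^2\|a^n_i-y_i\|^2\to 0$ (and likewise for $b^n$) because there are only three orthogonal summands, while $y\in A\cap B$ since each summand lies in $A\cap B\cap H_i$; alternatively, the finitely many norm-convergent components make $\{a^n\},\{b^n\}$ Cauchy, their limit being the point of $A\cap B$ furnished by Proposition \ref{PropositionWeakConvergence}. Everything downstream of the block-diagonal form of $P_A$ is thus a reduction to the known codimension-one case together with the elementary fact that a finite sum of norm-convergent sequences converges in norm.
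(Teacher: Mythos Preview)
Your proposal is correct and follows essentially the same approach as the paper: partition $\N$ according to $\supp(v_1)$, $\supp(v_2)$ and their complement, verify that both $P_A$ and $P_B$ act block-diagonally with respect to the resulting orthogonal decomposition, and thereby reduce to two independent codimension-one moment problems (handled by Remark~\ref{RemarkCodimension1}) plus an eventually constant piece. The only cosmetic differences are notational (your $H_i$, $S_0$ versus the paper's $x_{S_i}$, $Z$) and your slightly more careful treatment of the $S_0$-component as \emph{eventually} constant rather than constant.
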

	
	Before proving this result we show that Theorem \ref{TheoremDisjointEll2N} applies to situations where the hypotheses of Proposition \ref{PropositionConditionsBB} are not satisfied.
	
	\begin{rmk}
		The couple of normalized vectors 
		\begin{align*}
			&v_1=\frac{\sqrt{6}}{\pi}\lp 1,0,\frac{1}{2},0,\frac{1}{3},0,\dots\rp,\\
			&v_2=\frac{\sqrt{6}}{\pi}\lp 0,1,0,-\frac{1}{2},0,\frac{1}{3},0,\dots\rp
		\end{align*}
		is such that the convergence of the corresponding alternating projection sequences follows from Theorem \ref{TheoremDisjointEll2N} but is a not a consequence of Bausche-Borwein's conditions in Proposition \ref{PropositionConditionsBB}, since $v_1\in X^+$ and $Q(e_4) \notin X^+ $, where $e_n$ denotes the $n$-th element of the standard basis of $\ell_2$.
	\end{rmk}
	
	\begin{proof} Up to normalization, we can assume that the vectors $v_1$ and $v_2$ have norm 1. 		
		Let $S_1\coloneqq\supp(v_1)$, $S_2\coloneqq\supp(v_2)$ and  $Z\coloneqq \N\setminus (S_1\cup S_2)$. Every element $x\in X$ can be decomposed as a sum $x=x_{S_1} + x_{S_2}+x_Z$ where $\supp(x_T) \subseteq T$ for every $T\in\{S_1,S_2,Z\}$. By hypothesis, one has $(v_1)_{S_2} = (v_1)_Z = 0$ and $(v_2)_{S_1} = (v_2)_Z = 0$.
		
		Let $\{a^n\}$ and $\{b^n\}$ be the alternating projection sequences on $A$ and $B$ respectively, starting from a point $x$. In particular, one has 
		\begin{align*}
			a^{n+1}=P_A(b^n) = b^n -\langle b^n, v_1\rangle v_1 -\langle b^n, v_2\rangle v_2   
		\end{align*}
		and the decomposition in disjoint parts $b^n = b^n_{S_1} + b^n_{S_2} + b^n_Z$ implies $\langle b^n, v_1 \rangle = \langle b^n_{S_1}, v_1 \rangle$ and $\langle b^n, v_2 \rangle = \langle b^n_{S_2}, v_2 \rangle$, so that
		\begin{align*}
			a^{n+1} &= b^n -\langle b^n_{S_1}, v_1\rangle v_1 -\langle b^n_{S_2}, v_2\rangle v_2\\
			&=(b^n_{S_1} -\langle b^n_{S_1}, v_1\rangle v_1) + (b^n_{S_2} -\langle b^n_{S_2}, v_2\rangle v_2)+b^n_Z
		\end{align*}
		and the last three terms coincide with $a^{n+1}_{S_1}, a^{n+1}_{S_2}$ and $a^{n+1}_Z$ respectively.
		
		Now, the computation of the positive part in $\ell^2(\N)$ respects the disjunction of supports: in particular one has
		\begin{align*}
			b^{n+1}=P_B(a^{n+1})=(a^{n+1})^+ &= (a^{n+1}_{S_1}+a^{n+1}_{S_2}+a^{n+1}_Z)^+ = (a^{n+1}_{S_1})^+ + (a^{n+1}_{S_2})^+ + (a^{n+1}_Z)^+\\
			&=(b^n_{S_1} -\langle b^n_{S_1}, v_1\rangle v_1)^+ + (b^n_{S_2} -\langle b^n_{S_2}, v_2\rangle v_2)^+ + (b^n_Z)^+.
		\end{align*}
		In this way we proved that the alternating projection sequences $\{a^n,b^n\}$ can be decomposed in three distinct and pairwise disjoint alternating sequences $\{a^n_{S_1}, b^n_{S_1}\}$, $\{a^n_{S_2}, b^n_{S_2}\}$ and $\{a^n_Z, b^n_Z\}$, and the convergence is proved for each one separately. In fact, the sequence with support in $Z$ is constant, i.e. $a^{n+1}_Z=b^{n+1}_Z=b^n_Z$ for every $n$; the two remaining couples of sequences instead converge because they arise as a moment problem of codimension 1 with starting point $x_{S_1}$ and $x_{S_2}$ respectively, where the corresponding linear subspaces are $\spa\{v_1\}$ and $\spa\{v_2\}$, and the convergence is guaranteed by Remark \ref{RemarkCodimension1}. This implies the convergence of our initial couple of alternating sequences.
	\end{proof}
	The proof of Theorem \ref{TheoremDisjointEll2N} can be easily adapted for $N$ disjoint vectors. Hence, we obtain the following result.
	\begin{coro}
		Let $X\coloneqq \ell^2(\N)$ and let $\{v_1,v_2,\ldots,v_N\}$ be a collection of pairwise disjoint vectors in $X$. Let $A\coloneqq (\spa\{v_1,\ldots,v_N\})^{\perp}$  and $B\coloneqq (\ell^2(\N))^+$. Then the alternating projection sequences with respect to $A$ and $B$ starting from a point $x\in X$ converge in norm to a point $y\in A\cap B$.
	\end{coro}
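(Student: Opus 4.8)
The plan is to run the proof of Theorem \ref{TheoremDisjointEll2N} essentially verbatim, replacing the three–fold decomposition $\N = S_1\cup S_2\cup Z$ by an $(N+1)$–fold one. Set $S_i\coloneqq\supp(v_i)$ for $i=1,\ldots,N$ and $Z\coloneqq\N\setminus\bigcup_{i=1}^N S_i$. Since the $v_i$ are pairwise disjoint, Proposition \ref{PropositionDisjointEssSup} gives $S_i\cap S_j=\emptyset$ for $i\neq j$, so $\{S_1,\ldots,S_N,Z\}$ is a partition of $\N$ and every $x\in X$ decomposes uniquely as $x=x_Z+\sum_{i=1}^N x_{S_i}$ with $\supp(x_T)\subseteq T$ for each block $T$. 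As before, up to normalization we may assume $\|v_i\|=1$ for all $i$.

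Next I would check that both projections respect this block decomposition. For $P_A$: since $(v_j)_{S_i}=0$ whenever $i\neq j$ and $(v_j)_Z=0$, one has $\langle b^n,v_j\rangle=\langle b^n_{S_j},v_j\rangle$, whence $a^{n+1}=P_A(b^n)=b^n-\sum_{j=1}^N\langle b^n_{S_j},v_j\rangle v_j$ splits into the blocks $a^{n+1}_Z=b^n_Z$ and $a^{n+1}_{S_j}=b^n_{S_j}-\langle b^n_{S_j},v_j\rangle v_j$. For $P_B$: the positive part in $\ell^2(\N)$ is computed coordinatewise, hence respects disjoint supports, so $b^{n+1}_T=P_B(a^{n+1})_T=(a^{n+1}_T)^+$ for every block $T$. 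Therefore the pair $\{a^n,b^n\}$ decomposes into $N+1$ independent alternating projection sequences.

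To conclude, I would treat each block separately. The sequence supported on $Z$ is eventually constant, $a^{n+1}_Z=b^{n+1}_Z=b^n_Z$ for all $n\geq 0$. For each $i$, the pair $\{a^n_{S_i},b^n_{S_i}\}$ is exactly the alternating projection sequence for a codimension–one moment problem (in $\ell^2(\N)$, equivalently in $\ell^2(S_i)$) with linear subspace $(\spa\{v_i\})^{\perp}$ and cone the nonnegative cone, started at $x_{S_i}$; by Remark \ref{RemarkCodimension1} it converges in norm. A finite direct sum of norm-convergent sequences converges in norm, so $\{a^n\}$ and $\{b^n\}$ converge in norm; the common limit lies in $A\cap B$ since, by Proposition \ref{PropositionWeakConvergence}, the (weak) limit already belongs to $A\cap B$, and weak and norm limits coincide.

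I do not expect any real obstacle here: the single point deserving (trivial) attention is that disjointness of supports is preserved for every $n$ by the positive–part operator on $\ell^2(\N)$, because it acts coordinatewise, so that the $N+1$ subproblems genuinely decouple and none of them has codimension exceeding $1$.
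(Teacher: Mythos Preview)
Your proposal is correct and follows exactly the route the paper indicates: the paper does not give a separate proof of the corollary but simply states that the proof of Theorem~\ref{TheoremDisjointEll2N} ``can be easily adapted for $N$ disjoint vectors,'' and your write-up is precisely that adaptation, replacing the three-block partition $S_1\cup S_2\cup Z$ by the $(N{+}1)$-block partition $S_1\cup\cdots\cup S_N\cup Z$ and reducing each nontrivial block to a codimension-one moment problem via Remark~\ref{RemarkCodimension1}. The only trivial imprecision is the claim $b^{n+1}_Z=b^n_Z$ for all $n\geq 0$: in general $b^1_Z=(b^0_Z)^+$ may differ from $b^0_Z$, so the $Z$-block is constant from $n=1$ on; this does not affect the argument (and the paper's proof of Theorem~\ref{TheoremDisjointEll2N} contains the same harmless slip).
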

	\noindent
	\begin{rmk}
		Let $\alpha\in (-1,1)$, $\alpha\neq 0$ and consider the orthonormalized vectors of $\ell^2(\N)$
		\begin{align*}
			&v_1 = \frac{\sqrt{1-\alpha^2}}{|\alpha|}\lp \alpha,\alpha^2,\ldots,\alpha^n,\ldots\rp,\\
			&v_2 = \frac{\sqrt{1-\alpha^2}}{|\alpha|}\lp \alpha^2,-\alpha,\alpha^4,-\alpha^3,\ldots,\alpha^{2n},-\alpha^{2n-1},\ldots\rp.
		\end{align*}
		These two vectors do not satisfy the conditions of Theorem \ref{TheoremDisjointEll2N}; nonetheless, the linear combinations
		\begin{align*}
			&w_1\coloneqq v_1+\alpha v_2 = \frac{\sqrt{1-\alpha^2}}{|\alpha|}\lp \alpha+\alpha^3,0,\alpha^3+\alpha^5,0,\ldots,\alpha^{2n+1}+\alpha^{2n+3},0,\ldots\rp,\\
			&w_2\coloneqq -\alpha v_1 + v_2 = \frac{\sqrt{1-\alpha^2}}{|\alpha|}\lp 0,-\alpha^3-\alpha,0,-\alpha^5-\alpha^3,\ldots,0,-\alpha^{2n+3}-\alpha^{2n+1},\ldots\rp
		\end{align*}
		are disjoint vectors, and since $\spa\{v_1,v_2\}=\spa\{w_1,w_2\}$, the alternating projection sequences for $A=(\spa\{v_1,v_2\})^{\perp}$ and $B=X^+$ always converge by Theorem \ref{TheoremDisjointEll2N}.  We note, however, that the convergence was already assured since $\{w_1,w_2\}\subset B\cup (-B)$, and so Proposition \ref{PropositionConditionsBB} can be applied.
	\end{rmk}
	\begin{rmk}
		Similarly to the previous example, one can prove that alternating projection sequences converge when $\{v_1,v_2\}$ is an orthonormal system formed by
		\begin{align*}
			&v_1=\lp\alpha_1,\alpha_2, c_1\alpha_1, c_1\alpha_2, c_2\alpha_1, c_2\alpha_2,\ldots,c_n\alpha_1,c_n\alpha_2,\ldots\rp,\\
			&v_2=\lp\alpha_2,-\alpha_1, c_1\alpha_2, -c_1\alpha_1, c_2\alpha_2, -c_2\alpha_1,\ldots,c_n\alpha_2,-c_n\alpha_1,\ldots\rp
		\end{align*}
		with $(\alpha_1,\alpha_2)\neq (0,0)$ and $(c_i)_{i\in\N}\in\ell^2(\N)$. In fact, $\spa\{v_1,v_2\}=\spa\{w_1,w_2\}$ where $w_1\coloneqq \alpha_1v_1+\alpha_2 v_2$ and $w_2\coloneqq \alpha_2 v_1 -\alpha_1 v_2$ are disjoint vectors. The fact that this procedure works for any choice of $(c_i)_{i\in\N}\in\ell^2(\N)$ suggests that the possible convergence of the sequences for the moment problem does not depend on the speed of convergence of the components of the vectors $v_1$ and $v_2$.
	\end{rmk}
	
	A result analogous to %the one of 
	Theorem \ref{TheoremDisjointEll2N} can be proved for more general Hilbert lattices: we begin by showing it for a $L^2$ space.
	\begin{thm}\label{theoremDisjointL2Y}
		Let $(Y,\mu)$ be a measure space and $X=L^2(Y,\mu)$. Let  $\{f_1,\ldots,f_N\}$ be a collection of pairwise disjoint elements in $X$ and define $A\coloneqq (\spa\{ f_1,\ldots,f_N\})^{\perp}$ and $B\coloneqq X^+$.
		
		Then the alternating projection sequences with respect to $A$ and $B$ starting from a point $g\in X$ converge in norm to a point $h\in A\cap B$.
	\end{thm}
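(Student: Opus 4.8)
The plan is to reproduce the structure of the proof of Theorem \ref{TheoremDisjointEll2N}, with the countable supports $S_i\subseteq\N$ replaced by the measurable sets $A_i\coloneqq\{y\in Y\colon f_i(y)\neq 0\}$, each well defined up to a $\mu$-null set. Since a collection of disjoint vectors is automatically orthogonal (Propositions \ref{PropositionIsoLattice} and \ref{PropositionDisjointEssSup}), and since any $f_i$ equal to $0$ may be discarded without changing $A$, I would first reduce to the case in which $\{f_1,\dots,f_N\}$ is an orthonormal system. By Proposition \ref{PropositionDisjointEssSup} --- or directly, since $|f_i|_{X^+}\wedge|f_j|_{X^+}=0$ a.e.\ forces $f_i=0$ a.e.\ on $A_j$ --- the sets $A_i$ are pairwise disjoint modulo null sets. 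Setting $Z\coloneqq Y\setminus\bigcup_{i=1}^N A_i$ and, for measurable $T\subseteq Y$, $g_T\coloneqq g\,\mathbf{1}_T$, every $g\in X$ decomposes as $g=\sum_{i=1}^N g_{A_i}+g_Z$ into summands with pairwise disjoint supports.

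The next step is to record that both projections respect this decomposition. Because $f_i$ vanishes a.e.\ off $A_i$, one has $\langle g,f_i\rangle=\langle g_{A_i},f_i\rangle$ for every $g$, while $f_j\,\mathbf{1}_{A_i}=0$ a.e.\ for $j\neq i$ and $f_j\,\mathbf{1}_Z=0$ a.e.\ for all $j$; substituting into $P_A(g)=g-\sum_i\langle g,f_i\rangle f_i$ gives $\bigl(P_A(g)\bigr)_{A_i}=g_{A_i}-\langle g_{A_i},f_i\rangle f_i$ and $\bigl(P_A(g)\bigr)_Z=g_Z$. For the cone, the positive part in $L^2(Y,\mu)$ is taken pointwise $\mu$-a.e.\ (as recalled in Section 2), and $\{A_1,\dots,A_N,Z\}$ is, up to a null set, a partition of $Y$; hence $(g)^+=\sum_{i=1}^N(g_{A_i})^+ +(g_Z)^+$, so $P_B$ too is compatible with the splitting.

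Consequently, starting from any $g\in X$, the alternating projection sequences $\{a^n\},\{b^n\}$ of \eqref{alternatingProjectionDefinition} split into $N+1$ mutually disjoint alternating sequences: the $Z$-component is eventually constant ($a^{n+1}_Z=b^{n+1}_Z=b^n_Z$), and for each $i$ the component $\{a^n_{A_i},b^n_{A_i}\}$ is precisely the alternating projection sequence for the codimension-one moment problem in the Hilbert lattice $L^2(A_i,\mu|_{A_i})$ with subspace $(\spa\{f_i\})^{\perp}$ and cone $L^2_+(A_i,\mu|_{A_i})$, started at $g_{A_i}$. By Remark \ref{RemarkCodimension1} each of these converges in norm: either $f_i$ has a.e.\ constant sign on $A_i$ and condition 1) of Proposition \ref{PropositionConditionsBB} holds (for $h\geq_{X^+} 0$, $Q(h)=\langle h,f_i\rangle f_i$ stays in the cone), or $f_i$ changes sign on a set of positive measure and condition 2) holds, since then $\spa\{f_i\}\cap L^2_+(A_i,\mu|_{A_i})=\{0\}$. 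Adding the $N+1$ convergent, disjointly supported pieces yields norm convergence of $\{a^n\}$ and $\{b^n\}$ to a common limit $h$, which lies in $A\cap B$ by Proposition \ref{PropositionWeakConvergence}.

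I do not expect a genuine difficulty here: the argument is entirely parallel to the $\ell^2$ case, and the only points requiring care are measure-theoretic bookkeeping --- that $A_i$ and $Z$ are well defined up to null sets and that multiplication by $\mathbf{1}_T$ commutes with both $P_A$ and $P_B$ --- which is exactly what Proposition \ref{PropositionDisjointEssSup} and the pointwise description of the positive part provide. It is worth stressing that this $L^2$ statement is the one that, combined with the representation of an arbitrary Hilbert lattice as some $L^2(Y,\mu)$ in Proposition \ref{PropositionIsoLattice}, will deliver the same conclusion for a general Hilbert lattice.
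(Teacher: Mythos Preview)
Your proposal is correct and follows essentially the same approach as the paper: decompose $Y$ into the supports of the $f_i$ and a remainder $Z$, check that both $P_A$ and $P_B$ respect this decomposition, and reduce to $N$ independent codimension-one moment problems plus a constant $Z$-component. The only cosmetic differences are that you treat general $N$ directly (the paper does $N=2$ and then says the general case is an immediate adaptation) and that you use the measurable sets $\{f_i\neq 0\}$ rather than the essential supports $\esup(f_i)$, which is arguably cleaner since a general measure space need not carry a topology.
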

	\begin{proof}
		Without loss of generality, we can normalize so that the functions $f_i$ have norm 1. We prove the claim for $N=2$: the proof for generic $N$ follows by immediate adaptation.
		
		Let $g\in X$ and let $\{a^n\}$ and $\{b^n\}$ be the alternating projection sequences with respect to $A$ and $B$ starting from $g$. Let $E_1\coloneqq \esup (f_1)$, $E_2\coloneqq \esup (f_2)$ and $Z\coloneqq Y\setminus (E_1\cup E_2)$: by Proposition \ref{PropositionDisjointEssSup} one has $E_1\cap E_2=\emptyset$, so that $Y=E_1\cup E_2 \cup Z$ is a disjoint union. Moreover, every function $g\in L^2(Y,\mu)$ can be decomposed as $g= g_{E_1}+g_{E_2}+g_Z$, where $g_S\coloneqq (g\cdot \chi_S)(x)$ with $\chi_S$ being the characteristic function over $S$ and $S\in\{E_1,E_2,Z\}$. Note that $\langle g_{E_2}, f_1\rangle = \langle g_Z, f_1\rangle = \langle g_{E_1}, f_2\rangle = \langle g_Z, f_2\rangle = 0$ since disjoint elements are orthogonal.
		
		Thanks to this, for every function $b\in B$ the projection $P_A(b)$ on the linear subspace $A$ can be decomposed as $(P_A(b))_{E_1} + (P_A(b))_{E_2} + (P_A(b))_Z$: at the same time, one has
		\begin{align*}
			P_A(b) &= b -\langle b,f_1\rangle f_1 -\langle b,f_2\rangle f_2\\ 
			&= b_{E_1}+b_{E_2}+b_Z -\langle b_{E_1}+b_{E_2}+b_Z,f_1\rangle f_1 -\langle b_{E_1}+b_{E_2}+b_Z,f_2\rangle f_2\\
			&=(b_{E_1}-\langle b_{E_1},f_1\rangle f_1) + (b_{E_2}-\langle b_{E_2},f_2\rangle f_2) + b_Z
		\end{align*}
		and so one has almost everywhere the equalities $(P_A(b))_{E_1} = (b_{E_1}-\langle b_{E_1},f_1\rangle f_1)$, $(P_A(b))_{E_2} = (b_{E_2}-\langle b_{E_2},f_2\rangle f_2) $ and $(P_A(b))_Z = b_Z$, so that the corresponding equivalence classes are equal.
		
		Similarly, if $a\in A$, we can decompose $P_B(a)$ as $(P_B(a))_{E_1} + (P_B(a))_{E_2} + (P_B(a))_Z$: but the projection on $B=(L^2(Y,\mu))^+$ corresponds to taking the equivalence class of the positive part of the function, and almost everywhere one has the equalities
		\begin{align*}
			P_B(a) = a^+ = (a_{E_1}+a_{E_2}+a_Z)^+ = (a_{E_1})^+ + (a_{E_2})^+ + (a_Z)^+.    
		\end{align*}
		Since the essential supports of these three components are contained in $E_1, E_2, Z$ respectively, it follows that the equalities $(P_B(a))_{E_1} = (a_{E_1})^+$, $(P_B(a))_{E_2} = (a_{E_2})^+$ and $(P_B(a))_Z = (a_Z)^+$ hold almost everywhere, so that one has the equality for the corresponding equivalence classes.
		
		These facts together show that the convergence of the alternating sequences $\{a^n, b^n\}$ follows from the convergence of $\{a^n_{E_1},b^n_{E_1}\}, \{a^n_{E_2},b^n_{E_2}\}$ and $\{a^n_Z,b^n_Z\}$: the first two couples converge because they are given by a moment problem of codimension 1 with respect to $\spa\{f_1\}$ and $\spa\{f_2\}$ respectively, while the third couple is a constant sequence.
		
		The proof for $N\geq 3$ can be immediately recovered from an adaption of the one above.
	\end{proof}
	
	Finally, we combine the results from Theorem \ref{theoremDisjointL2Y} and Proposition \ref{PropositionIsoLattice} to obtain the corresponding result in any Hilbert lattice.

	\begin{thm}
		Let $X$ be a Hilbert lattice. Let $\{x_1,\ldots,x_N\}\subset X$ be a collection of pairwise disjoint vectors, and let  $A\coloneqq(\spa\{x_1,\ldots,x_N\})^{\perp}$.
		
		Then the alternating projection sequences with respect to $A$ and $X^+$ always converge to a point in $A\cap X^+$.
	\end{thm}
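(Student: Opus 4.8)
\textit{Proof sketch.}
The plan is to transport the whole situation into an $L^2$ space by means of Proposition \ref{PropositionIsoLattice} and then invoke Theorem \ref{theoremDisjointL2Y}. Let $f\colon X\to L^2(Y,\mu)$ be the isometric Hilbert space isomorphism furnished by Proposition \ref{PropositionIsoLattice}, so that $f(X^+)=(L^2(Y,\mu))^+$ and $f(x\lor y)=f(x)\lor f(y)$, $f(x\land y)=f(x)\land f(y)$ for all $x,y\in X$. Put $f_i\coloneqq f(x_i)$ for $i=1,\ldots,N$. Discarding any $x_i$ equal to $0$ changes neither $\spa\{x_1,\ldots,x_N\}$ nor $A$, so we may assume all $x_i\neq 0$.

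First I would check that the $f_i$ satisfy the hypotheses of Theorem \ref{theoremDisjointL2Y}. Since $f$ preserves $\lor$ and $\land$ (and hence moduli), for $i\neq j$ we have $|f_i|_{(L^2)^+}\land |f_j|_{(L^2)^+}=f\big(|x_i|_{X^+}\land|x_j|_{X^+}\big)=f(0)=0$, so the $f_i$ are pairwise disjoint in $L^2(Y,\mu)$. Next, I would verify that $f$ intertwines the two metric projections. Being a linear isometric bijection, $f$ maps $\spa\{x_1,\ldots,x_N\}$ onto $\spa\{f_1,\ldots,f_N\}$ and, preserving the inner product, maps $A=(\spa\{x_1,\ldots,x_N\})^{\perp}$ onto $A'\coloneqq(\spa\{f_1,\ldots,f_N\})^{\perp}$; since the metric projection onto a closed linear subspace is characterized purely in terms of the norm, $f\circ P_A=P_{A'}\circ f$. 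Likewise $f$ carries the closed convex cone $X^+$ isometrically onto $(L^2(Y,\mu))^+$, and an isometric bijection sends nearest-point maps to nearest-point maps, so $f\circ P_{X^+}=P_{(L^2(Y,\mu))^+}\circ f$.

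Now fix a starting point $g\in X$ and let $\{a^n\},\{b^n\}$ be the alternating projection sequences in $X$ with respect to $A$ and $X^+$. Applying the two intertwining identities inductively, $\{f(a^n)\}$ and $\{f(b^n)\}$ are precisely the alternating projection sequences in $L^2(Y,\mu)$ with respect to $A'$ and $(L^2(Y,\mu))^+$ started from $f(g)$. By Theorem \ref{theoremDisjointL2Y} these converge in norm to some $h\in A'\cap (L^2(Y,\mu))^+$; since $f$ is an isometry, $\{a^n\}$ and $\{b^n\}$ converge in norm to $y\coloneqq f^{-1}(h)$, and $y\in A\cap X^+$ because $f$ maps $A\cap X^+$ bijectively onto $A'\cap (L^2(Y,\mu))^+$.

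I expect the only genuinely non-routine point to be the commutation of $f$ with the two metric projections; once that is in place, the rest is a transport-of-structure argument. $\square$
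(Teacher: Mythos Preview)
Your proposal is correct and follows essentially the same route as the paper's own proof: transport everything to $L^2(Y,\mu)$ via the isometric lattice isomorphism of Proposition~\ref{PropositionIsoLattice}, verify that disjointness and the two projections are preserved under this map, and then invoke Theorem~\ref{theoremDisjointL2Y}. The only minor difference is that you explicitly discard zero vectors and spell out the intertwining $f\circ P_A=P_{A'}\circ f$, $f\circ P_{X^+}=P_{(L^2)^+}\circ f$ as a consequence of $f$ being an isometric bijection, whereas the paper states these commutations directly.
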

	
	\begin{proof}
		By Proposition \ref{PropositionIsoLattice}, there exist a measure space $(Y,\mu)$ and an isometric isomorphism $\psi: X \to L^2(Y,\mu)$ such that the lattice structure is preserved: in particular, one has $\psi(X^+)=B\coloneqq (L^2(Y,\mu))^+$, $\psi(x\land_{X^+} y)=\psi(x)\land_{B}\psi(y)$  and $\psi(x\lor_{X^+} y) = \psi(x) \lor_B \psi(y)$. If we consider $f_i\coloneqq \psi(x_i)$ for $i=1,\ldots,N$, the lattice preservation and the isometry imply that the functions $\{f_1,\ldots,f_N\}$ form an orthogonal system made by disjoint functions in $L^2(Y,\mu)$.
		
		Let $x_0\in X$ and consider the alternating projection sequence $\{a^n, c^n\}$ with respect to $A$ and $X^+$; let $W\coloneqq \psi(A)$. Since $\psi$ is an isometry, this linear subspace coincides in $L^2(Y,\mu)$ with $(\spa\{f_1,\ldots,f_N\})^{\perp}$. Consider then the elements $y_0\coloneqq \psi(x_0)$ and $w^n\coloneqq \psi(a^n), b^n\coloneqq \psi(c^n)$ for every $n\in\N$: we prove that this is an alternating projection sequence starting from $y_0$ with respect to $W$ and $B$.
		
		In fact, being $\psi$ an isometry, one has
		$$
		P_{W}(\psi(x)) = \psi(P_A(x)),\quad P_B(\psi(x)) = \psi(P_{X^+}(x))\quad\forall x\in X.
		$$
		Since $a^1=P_A(x_0)$, one has $w^1=\psi(a^1)=\psi(P_A(x_0))=P_W(\psi(x_0))=P_W(y_0)$. Similarly one has
		\begin{align*}
			&b^{n+1} = \psi(c^{n+1}) = \psi(P_{X^+}(a^{n+1})) = P_B(\psi(a^{n+1})) = P_B(w^{n+1}),\\
			&w^{n+1} = \psi(a^{n+1}) = \psi(P_A(c^n)) = P_W(\psi(c^n)) = P_W(b^n)
		\end{align*}
		for every $n$. This proves that we have a moment problem of codimension $N$ in $L^2(Y,\mu)$: by Theorem \ref{theoremDisjointL2Y}, these alternating projection sequences converge to a point $u\in W\cap B$, and therefore the original sequences in $X$ must converge to the point $z\coloneqq \psi^{-1}(u)\in A\cap X^+$, proving our claim.
	\end{proof}

	\section{A step further in $\ell^2(\N)$: subspace generated by vectors with finite intersection of supports}
	In this last section we try to extend our approach to a more general case. Indeed, we study our problem under the assumption that the subspace involved is generated by vectors with the intersection of supports given by a finite set, instead of the empty set as in our main result. 
	Namely, we prove a result concerning the moment problem in the specific space $\ell^2(\N)$, with a linear subspace of codimension 2 and the cone $(\ell^2(\N))^+$ of  elements with non-negative coordinates.

	\begin{thm}\label{TheoremFiniteSupport}
		Let $X\coloneqq \ell^2(\N)$, $A\coloneqq (\spa\{v_1,v_2\})^{\perp}$ with $v_1$ and $v_2$ an orthogonal system and $B\coloneqq (\ell^2(\N))^+$. Assume $v_1\in B$, while $v_2$ has components which are definitively non-negative or non-positive. Moreover, assume $\supp(v_1)\cap\supp(v_2)$ is finite.
		
		Then the alternating projection sequences with respect to $A$ and $B$ starting from a point $x\in X$ converge in norm to a point $y\in A\cap B$.
	\end{thm}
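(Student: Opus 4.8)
The plan is to reproduce, as far as possible, the support decomposition used for Theorem~\ref{TheoremDisjointEll2N}, the difference being that one extra finite ``bad block'' survives which cannot be decoupled. Since $v_2$ is definitively of constant sign, replacing $v_2$ by $-v_2$ if necessary I may assume $(v_2)_i\ge 0$ for all but finitely many $i$; I then fix a \emph{finite} set $G\subset\N$ containing both $\supp(v_1)\cap\supp(v_2)$ and $\{i:(v_2)_i<0\}$, and set $S_1\coloneqq\supp(v_1)\setminus G$, $S_2\coloneqq\supp(v_2)\setminus G$ and $Z\coloneqq\N\setminus(G\cup\supp(v_1)\cup\supp(v_2))$, so that $\N=G\sqcup S_1\sqcup S_2\sqcup Z$. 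Writing $x_T$ for the restriction of $x\in X$ to a block $T$, the recursion $a^{n+1}=b^n-\beta_n v_1-\gamma_n v_2$, with $\beta_n\coloneqq\langle b^n,v_1\rangle$ and $\gamma_n\coloneqq\langle b^n,v_2\rangle$, splits blockwise, and $b^{n+1}=(a^{n+1})^+$ respects the blocks too; the only coupling is through the two scalars $\beta_n,\gamma_n$, where $\beta_n$ sees only the coordinates in $G\cup S_1$ and $\gamma_n$ only those in $G\cup S_2$. All the sequences involved are bounded by Proposition~\ref{PropositionFejer}.

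Three of the blocks are easy. On $Z$ the sequence is constant from $n=1$ on. On $G$, which is finite-dimensional, the weak convergence of $\{b^n\}$ to the Bregman limit $x^\ast\in A\cap B$ (Proposition~\ref{PropositionWeakConvergence}) is already norm convergence. On $S_1\cup Z$: for every index $i$ there one has $Q(b^n)_i=\beta_n(v_1)_i\ge 0$, since $b^n\ge 0$ and $v_1\in B$, whence $b^{n+1}_i=(b^n_i-Q(b^n)_i)^+\le b^n_i$, exactly as in the proof of Proposition~\ref{PropositionConditionsBB}(1); so $\{(b^n)_{S_1\cup Z}\}$ is non-negative, coordinatewise non-increasing and bounded, and converges in norm by Proposition~\ref{PropositionDecreasing}.

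Everything thus reduces to the block $S_2$, where $v_2\ge 0$. With $u^n\coloneqq(b^n)_{S_2}$ and $w\coloneqq(v_2)_{S_2}\ge 0$ the recursion there is $u^{n+1}=(u^n-\gamma_n w)^+$; were $\gamma_n=\langle b^n,v_2\rangle\ge 0$ for all large $n$, the monotonicity argument above would finish the proof, so the genuine difficulty is the possible sign change of $\gamma_n$. I would prove the summability $\sum_n(\gamma_n)^-<\infty$, where $(t)^-\coloneqq\max(-t,0)$, and conclude as follows: setting $t_n\coloneqq\sum_{k\ge n}(\gamma_k)^-$ one has $t_n\downarrow 0$, and $\widetilde u^n\coloneqq u^n+t_n w$ is coordinatewise non-increasing (when $\gamma_n\ge 0$ one has $u^{n+1}_i-u^n_i\le 0=(t_n-t_{n+1})w_i$, and when $\gamma_n<0$ one has $u^{n+1}_i-u^n_i=(\gamma_n)^-w_i=(t_n-t_{n+1})w_i$), non-negative and bounded, hence convergent in norm by Proposition~\ref{PropositionDecreasing}; since $t_n w\to 0$, $\{u^n\}$ converges in norm as well. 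Assembling the four blocks then gives norm convergence of $\{b^n\}$, and of $\{a^n\}$ since $\|a^{n+1}-b^n\|\to 0$, to a point of $A\cap B$ (necessarily $x^\ast$).

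The crux is therefore $\sum_n(\gamma_n)^-<\infty$, and this is where the hypotheses must be spent. Because $b^n\ge 0$ and $v_2\ge 0$ off $G$, one may write $\gamma_n=\langle (b^n)_G,(v_2)_G\rangle+\langle u^n,w\rangle$ with the second summand non-negative, so the negative excursions of $\gamma_n$ are controlled by the finite block $G$, on which $b^n\to x^\ast$ in norm. Using in addition $\langle x^\ast,v_2\rangle=0$ (the Bregman limit lies in $A$) and the standard telescoping bound $\sum_n\|a^{n+1}-b^n\|^2=\sum_n(\beta_n^2+\gamma_n^2)<\infty$, I would split according to the sign of $\delta_\infty\coloneqq\lim_n\langle (b^n)_G,(v_2)_G\rangle=-\langle (x^\ast)_{S_2},w\rangle$: if $\delta_\infty>0$ then $\gamma_n>0$ eventually and nothing is needed; if $\delta_\infty=0$ then $(x^\ast)_{S_2}=0$ and $u^n\rightharpoonup 0$; if $\delta_\infty<0$ then $x^\ast$ has positive mass on some coordinate $i_0\in G$ with $(v_2)_{i_0}<0$, for which the positive part in the recursion becomes inactive once $n$ is large, so that $\gamma_n$ equals $(b^{n+1}_{i_0}-b^n_{i_0})/(v_2)_{i_0}$ and $(\gamma_n)^-$ equals, up to the constant $|(v_2)_{i_0}|$, the negative increment $(b^n_{i_0}-b^{n+1}_{i_0})^+$. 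In each case one is reduced to a finite-dimensional summability statement about the variation of the $G$-coordinates of $b^n$ (if $i_0$ also lies in $\supp(v_1)$ one must carry $\beta_n$ along as well). Establishing that summability — using essentially that $\supp(v_1)\cap\supp(v_2)$ is finite and that $v_2$ is negative only finitely often, which confine all the problematic behaviour to the block $G$ where the iterates already converge — is the technical heart of the argument, and the step I expect to be the most delicate.
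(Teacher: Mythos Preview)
Your block decomposition and the treatment of the easy blocks coincide with the paper's: your $Z$, $G$, $S_1$, $S_2$ are the paper's $Z$, $I\cup N$, $V$, $P$, and the arguments for the first three (constant on $Z$; finite support on $I\cup N$ via Lemma~\ref{LemmaWeakConvFiniteSupport}; lattice-decreasing on $V$ via Proposition~\ref{PropositionDecreasing}) are identical. The divergence is entirely on the block $S_2=P$, and there your plan has a genuine gap. The reduction to $\sum_n(\gamma_n)^-<\infty$ through the shifted sequence $\widetilde u^n=u^n+t_nw$ is valid, but the summability itself is not established and your sketch does not yield it. The case $\delta_\infty>0$ is in fact vacuous (since $x^\ast\ge 0$ and $w>0$ on $S_2$, one has $\delta_\infty=-\langle (x^\ast)_{S_2},w\rangle\le 0$). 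In the case $\delta_\infty=0$ you obtain only $u^n\rightharpoonup 0$, which says nothing about $\sum(\gamma_n)^-$. In the case $\delta_\infty<0$ you reduce (for $i_0\notin\supp(v_1)$) to the total positive variation $\sum_n(b^n_{i_0}-b^{n+1}_{i_0})^+$ of a convergent real sequence; but convergence, even together with $\sum_n(b^{n+1}_{i_0}-b^n_{i_0})^2<\infty$, does not imply bounded variation. The ingredients you list (norm convergence on $G$, $\sum\gamma_n^2<\infty$, $\langle x^\ast,v_2\rangle=0$) are not enough to force $\sum(\gamma_n)^-<\infty$, and it is unclear this summability even holds.

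The paper sidesteps this entirely. On $P$ it does not control $(\gamma_n)^-$ termwise but tracks the \emph{cumulative} sums $\Sigma^{(j)}_\ell\coloneqq\sum_{k=0}^{\ell}\gamma_{j+k}$: as long as $\Sigma^{(j)}_0,\dots,\Sigma^{(j)}_{m-1}\le 0$, the positive part on $P$ is inactive and one has exactly $b^{j+m}_P=b^{j}_P-\Sigma^{(j)}_{m-1}(v_2)_P$. This gives a dichotomy. If for some $j$ every $\Sigma^{(j)}_\ell\le 0$, then $b^{j+m}_P-b^{j}_P$ moves along the single line $\R\,(v_2)_P$; pointwise convergence forces $\Sigma^{(j)}_\ell\to\Sigma_\infty$, and norm convergence of $b^m_P$ is immediate. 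Otherwise, for every $j$ some first $\Sigma^{(j)}_{k(j)}\ge 0$, whence $b^{j+k(j)+1}_P\le_{B} b^{j}_P$; iterating yields a lattice-decreasing subsequence $\{b^{j_m}_P\}$, convergent by Proposition~\ref{PropositionDecreasing}. Combined with the other blocks this gives a norm-convergent subsequence $\{b^{j_m}\}$, and Fej\'er monotonicity (Proposition~\ref{PropositionFejer}) upgrades it to convergence of the full sequence. The idea you are missing is thus to aggregate the $\gamma_n$ into running sums rather than bound their negative parts individually, and to accept a monotone \emph{subsequence} on $P$---letting Fej\'er monotonicity close the gap---rather than to engineer global monotonicity of a modified sequence.
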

	Before proving this Theorem, we point out that the assumptions of Theorem \ref{TheoremFiniteSupport} hold also in cases where Proposition \ref{PropositionConditionsBB} does not apply.
	\begin{rmk}
		%Theorem  \ref{TheoremFiniteSupport} implies the norm convergence for cases which are not described by Proposition \ref{PropositionConditionsBB}: as an example, take 
		Assume that the two vectors $v_1$ and $v_2$ are given by
		\begin{align*}
			&v_1\coloneqq \frac{1}{\sqrt{2}}\lp 1,1,0,0,0,\ldots\rp,\\
			&v_2\coloneqq \sqrt{\frac{3}{7}}\lp 1, -1, \frac{1}{2},\frac{1}{4},\frac{1}{8},\ldots,\frac{1}{2^n},\ldots\rp.
		\end{align*}
		Let us recall that $e_n$ denotes the $n$-th element of the standard basis of $\ell^2$. Then, $v_2\notin B \cup (-B)$, so that $Q(e_3)=\langle e_3,v_1\rangle v_1 + \langle e_3,v_2\rangle v_2 = \frac{1}{2}\sqrt{\frac{3}{7}}\cdot v_2\notin B$; moreover $v_1\in \spa\{v_1,v_2\}\cap (\ell^2(\N))^+$.
	\end{rmk}
	
	\begin{proof} As in the previous theorems, we can assume that $v_1$ and $v_2$ have norm 1, since the normalization preserves all the hypotheses of the theorem.
		
		Let $v_1=(v_{1,n})_{n\in\N}$ and let us assume that $v_2=(v_{2,n})_{n\in\N}$ has components definitively non-negative.
		Let $x\in\ell^2(\N)$ and let $\{a^j\}$ and $\{b^j\}$ be the alternating projections sequences on $A$ and $B$ starting from $x$: both sequences weakly converge to the same point $y\in A\cap B$ by Proposition \ref{PropositionWeakConvergence}. Up to applying $P_A$ and $P_B$, we can assume that $x=b^0\in B$: moreover we can decompose the sequences as 
		$$b^j = b^j_V + b^j_I+ b^j_Z + b^j_N + b^j_P \quad\text{ and }\quad a^j = a^j_V + a^j_I+ a^j_Z + a^j_N + a^j_P,$$
		where $\supp(b^j_S), \supp(a^j_S)\subseteq S$ for every $S\in\{V,I,Z,N,P\}$ and the sets are defined as follows:
		\begin{itemize}
			\item $V\coloneqq \supp(v_1)\setminus\supp(v_2)$;
			\item $I\coloneqq \supp(v_1)\cap\supp(v_2);$
			\item $Z\coloneqq \N\setminus (\supp(v_1)\cup\supp(v_2))$;
			\item $N\coloneqq \{n\in \supp(v_2)\setminus\supp(v_1)\colon v_{2,n} < 0 \}$;
			\item $P\coloneqq \{n\in \supp(v_2)\setminus\supp(v_1)\colon v_{2,n} > 0 \}$.
		\end{itemize}
		
		We recall that if the sequence $\{b^j\}$ is norm convergent, then $\{a^j\}$ converges too. Hence, it is sufficient to prove the convergence of $\{b^j\}$.
		
		We prove the norm convergence of $b^j$ by studying the convergence of each one of the five parts in which the sequence has been divided. In order to do this, let us remember that $a^{j+1}=P_A(b^j) = b^j - \langle b^j,v_1\rangle v_1 - \langle b^j,v_2\rangle v_2$ and $b^{j+1}=(a^{j+1})^+$ for every $j$. The last equality is true componentwise, i.e. $b^{j+1}_n=\lp a^{j+1}_n\rp^+$ for every $n$.
		
		By definition, the sequences $b^j_I$ and $b^j_N$ have support contained in $I$ and $N$ respectively for every $j$. But $I$ and $N$ are finite sets by hypothesis, therefore the sequences converge in norm thanks to Lemma \ref{LemmaWeakConvFiniteSupport}.
		
		For every $n\in Z$ we have 
		$$a^{j+1}_n = b^j_n-\langle b^j,v_1\rangle \underbrace{v_{1,n}}_{=0} - \langle b^j,v_2\rangle \underbrace{v_{2,n}}_{=0}  = b^j_n.$$ 
		This implies $b^{j+1}_n = b^j_n$  and so $b^j_Z$ is a constant sequence. 
		
		For every $n\in V$ we have
		$$
		a^{j+1}_n = b^j_n-\langle b^j,v_1\rangle v_{1,n} - \langle b^j,v_2\rangle \underbrace{v_{2,n}}_{=0}  = b^j_n - \langle b^j,v_1\rangle v_{1,n}
		$$
		and since $b^j$ and $v_1$ both belong to $B$, we have $a^{j+1}_n\leq b^j_n$ and in turn $b^{j+1}_n\leq b^j_n$. Thus the sequence satisfies $b^{j+1}_I\leq b^j_I$ for every $j$, where the inequality refers to the lattice structure induced by $B$: therefore, the sequence $b^j_I$ converges in norm thanks to Proposition \ref{PropositionDecreasing}.  
		
		Finally, we study the norm convergence of $b^j_P$. We note that for every $n\in P$ we have
		$$
		a^1_n = b^0_n - \langle b^0,v_2\rangle v_{2,n}
		$$
		and since $b^1_n = (a^1_n)^+$, it follows $b^1_n\leq b^0_n$ if $\langle b^0,v_2\rangle\geq 0$, while $b^1_n=a^1_n\geq b^0_n$ if $\langle b^0,v_2\rangle\leq 0$. In the latter case, one has
		$$
		a^2_n = b^1_n - \langle b^1,v_2\rangle v_{2,n} = b^0_n - \langle b^0+b^1,v_2\rangle v_{2,n}
		$$
		and the same alternative presents.
		
		We thus have two possibilities:\smallskip
		\begin{itemize}
			\item[1)] There exists $i\geq 0$ such that for every $\ell\geq 0$ the inequality  $\langle b^i+b^{i+1}+\cdots+b^{i+\ell},v_2\rangle\leq 0$ holds. Then, for every $\ell\geq 0$ one has 
			$$b^{i+\ell+1}_n = b^i_n - \langle b^i+b^{i+1}+\cdots+b^{i+\ell},v_2\rangle v_{2,n} = b^i_n - \sum_{k=0}^\ell\langle b^{i+k},v_2\rangle v_{2,n}\quad\forall\; n\in P.$$
			For every such value of $n$, the weak convergence of $b^j$ implies $b^j_n \to y_n $  as $j\to +\infty$ and so $y_n = b^i_n - \sum_{k=0}^{+\infty}\langle b^{i+k},v_2\rangle v_{2,n}\in\R$ (and in fact the series of scalar products is convergent). But then for every $j\geq i$ we have  
			\begin{align*}
				\sum_{n\in P}(b^j_n-y_n)^2 = \sum_{n\in P}\lp \sum_{k=j}^{+\infty} \langle b^k,v_2\rangle\rp^2 v_{2,n}^2 \leq \lp \sum_{k=j}^{+\infty} \langle b^k,v_2\rangle\rp^2\|v_2\|^2 \to 0\quad\text{ as }j\to +\infty
			\end{align*}
			and therefore $b^j_P$ converges in norm. Together with the convergence of the previous four parts, it implies the norm convergence of the sequence.\smallskip\\
			\item[2)] For every $j\geq 0$, there exists a value $k\geq 1$ such that 
			$$\langle b^j,v_2\rangle\leq 0,\; \langle b^j+b^{j+1},v_2\rangle\leq 0,\;\ldots,\langle b^j+b^{j+1}+\cdots+b^{j+k-1},v_2\rangle\leq 0$$
			and
			$$\langle b^j+\cdots+b^{j+k-1}+b^{j+k},v_2\rangle\geq 0.$$
			It follows that
			$$
			a^{j+k+1}_n = b^j_n - \langle b^j+\cdots+b^{j+k-1}+b^{j+k},v_2\rangle v_{2,n} \leq b^j_n\quad\forall n\in P
			$$
			and this produces $b^{j+k+1}_P\leq b^j_P$ with respect to the lattice structure.
			
			Starting from $b^0_P$, this procedure gives a decreasing subsequence $b^{j_n}_P$ which therefore converges in norm by Proposition \ref{PropositionDecreasing}. Together with the norm convergence of the four previous parts, this yields the norm convergence to $y \in A\cap B$ of the subsequence $b^{j_n}$. But Proposition \ref{PropositionFejer} yields
			$$
			\|b^{j+1}-y\|\leq \|b^j-y\|\quad\forall j\in\N
			$$
			and the norm convergence of $b^j$ follows from the one of the subsequence $b^{j_n}$.
		\end{itemize}
		This proves the claim when $v_{2,n}\geq 0$ definitively: the proof for $v_{2,n}\leq 0$ is completely similar.
	\end{proof}
	This result admits the following generalization whenever the codimension of $A$ is $N\geq 3$: the proof goes along the same steps as the one of Theorem \ref{TheoremFiniteSupport}.
	
	\begin{coro}
		Let $X\coloneqq \ell^2(\N)$ and, for $N\geq 3$, let $\{v_1,v_2,\ldots,v_N\}$ be an orthogonal family in $X$. Let $A\coloneqq (\spa\{v_1,\ldots,v_N\})^{\perp}$  and $B\coloneqq (\ell^2(\N))^+$. Assume $v_1,v_2,\ldots,v_{N-1}\in B$ while $v_N$ has components which are definitively non-negative or non-positive. Moreover, assume $\supp(v_i)\cap\supp(v_N)$ is finite for $1\leq i\leq N-1$.
		
		Then the alternating projection sequences with respect to $A$ and $B$ starting from a point $x\in X$ converge in norm to a point $y\in A\cap B$.
	\end{coro}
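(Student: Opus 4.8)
The plan is to run the proof of Theorem \ref{TheoremFiniteSupport} essentially verbatim, with the single direction $v_2$ there replaced by $v_N$ and the single ``good'' direction $v_1$ replaced by the whole block $v_1,\dots,v_{N-1}$. As in that proof I would first normalize all the $v_i$ to norm $1$, assume without loss of generality that $v_N$ has definitively non-negative components (the non-positive case being symmetric), and reduce to a starting point $b^0=x\in B$ by applying one round of $P_A$ followed by $P_B$.

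Next I would partition $\N$. Writing $U\coloneqq\bigcup_{i=1}^{N-1}\supp(v_i)$, set
$$V\coloneqq U\setminus\supp(v_N),\qquad I\coloneqq U\cap\supp(v_N),\qquad Z\coloneqq\N\setminus\bigl(U\cup\supp(v_N)\bigr),$$
$$M\coloneqq\{n\in\supp(v_N)\setminus U:\ v_{N,n}<0\},\qquad P\coloneqq\{n\in\supp(v_N)\setminus U:\ v_{N,n}>0\}.$$
Here $I=\bigcup_{i=1}^{N-1}\bigl(\supp(v_i)\cap\supp(v_N)\bigr)$ is finite, being a finite union of finite sets by hypothesis, and $M$ is finite because $v_N$ is definitively non-negative; these play the role of the sets $I$ and $N$ in Theorem \ref{TheoremFiniteSupport}. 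After decomposing $b^j=b^j_V+b^j_I+b^j_Z+b^j_M+b^j_P$ and likewise for $a^j$, I would note that $b^{j+1}=(a^{j+1})^+$ is taken componentwise and $a^{j+1}_n=b^j_n-\sum_{i=1}^N\langle b^j,v_i\rangle v_{i,n}$, so that each block evolves only through $b^j_n$ and the scalars $\langle b^j,v_i\rangle$.

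Then I would dispose of the five blocks exactly as in the codimension-$2$ proof: $b^j_I$ and $b^j_M$ have support in the finite sets $I$ and $M$, hence converge in norm by Lemma \ref{LemmaWeakConvFiniteSupport} together with the weak convergence of Proposition \ref{PropositionWeakConvergence}; on $Z$ every $v_{i,n}$ vanishes so $b^j_Z$ is constant; on $V$ one has $v_{N,n}=0$, so $a^{j+1}_n=b^j_n-\sum_{i=1}^{N-1}\langle b^j,v_i\rangle v_{i,n}$, and since $b^j\in B$ and each $v_i\in B$ for $i\le N-1$ every summand is non-negative, giving $b^{j+1}_V\leq_{X^+}b^j_V$ and hence convergence of $b^j_V$ by Proposition \ref{PropositionDecreasing}; and on $P$ one has $v_{i,n}=0$ for all $i\le N-1$ while $v_{N,n}>0$, so $a^{j+1}_n=b^j_n-\langle b^j,v_N\rangle v_{N,n}$, which is precisely the situation analysed for the set $P$ in Theorem \ref{TheoremFiniteSupport}. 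The same dichotomy then applies word for word --- either the partial sums $\langle b^i+\cdots+b^{i+\ell},v_N\rangle$ are eventually $\le 0$ for every $\ell$, giving a telescoping identity that (using $\|v_N\|=1$) forces $\sum_{n\in P}(b^j_n-y_n)^2\to0$, or one extracts a $\leq_{X^+}$-decreasing subsequence $b^{j_k}_P$ that converges by Proposition \ref{PropositionDecreasing}, after which the Fej\'er-monotonicity of Proposition \ref{PropositionFejer} upgrades subsequential convergence of $\{b^j\}$ to convergence of the whole sequence. Combining the five blocks yields norm convergence of $\{b^j\}$, hence of $\{a^j\}$, to a point of $A\cap B$.

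I do not expect a genuine obstacle here, since the statement is a routine generalization; the one step that actually uses the hypotheses in an essential way is the block $V$, where I need every one of $v_1,\dots,v_{N-1}$ to lie in $B$ so that the correction term remains a sum of non-negative contributions and the monotonicity survives --- this is precisely why only the last direction $v_N$ is permitted to leave the cone. The only notational wrinkle is that the set denoted $N$ in the proof of Theorem \ref{TheoremFiniteSupport} has to be renamed (above, $M$) to avoid a clash with the codimension $N$.
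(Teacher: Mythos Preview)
Your proposal is correct and matches the paper's approach exactly: the paper does not write out a separate proof for this corollary but simply states that ``the proof goes along the same steps as the one of Theorem \ref{TheoremFiniteSupport},'' and your sketch carries out precisely that adaptation --- replacing $v_2$ by $v_N$, bundling $v_1,\dots,v_{N-1}$ into the single block responsible for monotone decrease on $V$, and noting that $I=\bigcup_{i<N}(\supp(v_i)\cap\supp(v_N))$ and the sign-exception set remain finite. The renaming of the set $N$ to $M$ is a sensible cosmetic fix.
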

	
	\begin{rmk}
		The general study for the moment problem of a linear subspace is not complete yet, but in view of our results a possible way to attack it could be the following. First, assuming the intersection of supports is finite, one should try to remove the assumption about the vectors being mostly in the cone and then the one about the signs of the components being definitively constant (see Theorem \ref{TheoremFiniteSupport}). Once one achieves this result, the next step would be to overcome the assumption concerning the finiteness of the intersection of supports and then moving on to the setting of generic Hilbert lattices. Finally, one should extend this approach to the moment problem where the linear subspace is replaced by an affine subspace.
	\end{rmk}

\end{document}